\newcommand{\field}[1]{\mathbb{#1}}
\newcommand{\C}{\field{C}}
\newcommand{\ignore}[1]{}
\newtheoremstyle{s2}{9pt}{9pt}{\rm}{}{\bf}{.}{0.5em}{}
\theoremstyle{s2}
\newtheorem{defi}{Definition}[section]
\newtheorem{ex}[defi]{Example}
\newtheorem{re}[defi]{Remark}
\newtheoremstyle{s1}{9pt}{9pt}{\it}{}{\bf}{.}{0.5em}{}
\theoremstyle{s1}
\newtheorem{lem}[defi]{Lemma}
\newtheorem{theo}[defi]{Theorem}
\newtheorem{co}[defi]{Corollary}
\DeclareMathOperator{\Sing}{Sing}
\DeclareMathOperator{\mult}{mult}
\DeclareMathOperator{\ord}{ord}
\DeclareMathOperator{\res}{res}
\DeclareMathOperator{\im}{Im}
\title[On stable polynomial mappings]{On stable polynomial mappings} \makeatletter
\author{M. \ Farnik \& Z. Jelonek }
\address[M. Farnik]{Jagiellonian University\\
Faculty of Mathematics and Computer Science\\
{\L}ojasie\-wi\-cza~6, 30-348 Krak\'ow, Poland}
\email{michal.farnik@gmail.com}
\address[Z. Jelonek]{Instytut Matematyczny\\
Polska Akademia Nauk\\
\'Sniadeckich 8, 00-656 Warszawa, Poland}
\email{najelone@cyf-kr.edu.pl}
\keywords{ polynomials, folds, cusp singularities}
\subjclass{14 R 99, 32 A 10}
\thanks{The  authors are partially supported by the grant of Narodowe Centrum Nauki number 2019/33/B/ST1/00755}
\begin{document}

\begin{abstract}
For given natural numbers $d_1,d_2$ let $\Omega_2(d_1,d_2)$ be the set off all polynomial mappings $F=(f,g):\C^2\to\C^2$ such that $\deg f\le d_1$, $\deg g\le d_2$. We say that the mapping $F$ is  topologically stable in $\Omega_2(d_1,d_2)$ if for every small deformation $F_t\in \Omega_2(d_1,d_2)$ the mapping $F_t$ is topologically equivalent to the mapping $F$. The aim of this paper is to characterize the topologically stable mappings in $\Omega_2(d_1,d_2)$.
In particular we show how to effectively determine a member of $\Omega_2(d_1,d_2)$ with generic topology. \end{abstract}

\maketitle

\bibliographystyle{alpha}

\section{Introduction}
Polynomial mappings $F\colon \C^n\to\C^n$ are the most classical objects in complex analysis, yet the study of their topology has begun only recently.  The first paper in this direction was \cite{fjr}, where authors show that a ``Whitney type Theorem" is true for the family $\Omega_2(d_1,d_2)$, with the natural topology of $\Omega_2(d_1,d_2)$ (instead of the Whitney topology, which is discrete on $\Omega_2(d_1,d_2)$). 

Let us recall that $\Omega_2(d_1,d_2)$ is the set off all polynomial mappings $F=(f,g):\C^2\to\C^2$ such that $\deg f\le d_1$, $\deg g\le d_2$.  The set $\Omega_2(d_1,d_2)$ can be identified with $\C^N$, for the appropriate $N$, by identifying a pair of polynomials with the $N$-tuple of their coefficients. Thus, we have the Euclidean and the Zariski topologies on $\Omega_2(d_1,d_2)$. When we refer to map-germs, small deformations and small neighborhoods we use the Euclidean topology. When we refer to open subsets of $\Omega_2(d_1,d_2)$ we use the Zariski topology. Obviously a Zariski open set is an Euclidean open set, yet often we can show that an Euclidean open set is Zariski open by noticing that it is Zariski constructible.

In the paper \cite{fj} we classify all topological types of quadratic mappings $F: \C^2\to \C^2$. If we consider polynomial mappings of higher algebraic degree then the problem of topological classification becomes very difficult.

However, in  papers \cite{jel}, \cite{fjr} it is shown that there exists a Zariski open, dense subset $U$ of $\Omega_2(d_1,d_2)$ such that every two mappings $f,g\in U$ are topologically equivalent and locally topologically stable (in the classical sense). If a mapping $G\in \Omega_2(d_1,d_2)$ is topologically equivalent to some mapping $F\in U$ we say that $G$ has a \emph{generic topological type}. In this paper we would like to characterize the mappings with generic topological type. We show that a mapping $F\in \Omega_2(d_1,d_2)$ has generic topological type if and only if it is \emph{topologically stable}, i.e., for every small deformation $F_t\in \Omega_2(d_1,d_2)$ of $F$, the mapping $F_t$ is topologically equivalent to the mapping $F$. In particular we show that mappings with generic topological type form a Zariski open subset in $\Omega_2(d_1,d_2)$. 
 
Our notion of topological stability, which is natural in the context of polynomial mappings, is different from the classical notion of topological stability of smooth mappings. Note that in our case we can not use the Whitney topology (it is discrete on $\Omega_2(d_1,d_2)$). In fact there are polynomial mappings which are stable in the classical sense, but not in our sense.  

We give a geometric characterization of the generic topological type:

\begin{theo}\label{Thm:main}
Let $G\in \Omega_2(d_1,d_2)$, where $d_1d_2>2$. Assume that $G$ has $$c(G)=d_1^2+d_2^2+3d_1d_2-6d_1-6d_2+7$$ cusps and $$n(G)=\frac{1}{2}\left[(d_1d_2-4)((d_1+d_2-2)^2-2)-(\gcd(d_1,d_2)-5)(d_1+d_2-2)-6\right]$$ nodes. Then $G$ is locally stable and topologically stable in $\Omega_2(d_1,d_2)$.
In particular, mappings with the generic topological type form a Zariski open subset in $\Omega_2(d_1,d_2)$.
\end{theo}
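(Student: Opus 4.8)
The plan is to identify the set of mappings in $\Omega_2(d_1,d_2)$ having exactly $c(G)$ cusps and $n(G)$ nodes with the generic Zariski-open stratum $U\subseteq\Omega_2(d_1,d_2)$ produced in \cite{jel},\cite{fjr}, and then to invoke the fact, recalled above, that all members of $U$ are locally stable and mutually topologically equivalent. So the whole theorem reduces to the set-theoretic equality between the ``maximal count'' locus and $U$, together with the Zariski openness of that locus.

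First I would recall from \cite{fjr},\cite{jel} the precise structure of $U$: for $F\in U$ the ramification curve $C(F)=\{\operatorname{Jac}F=0\}$ is smooth of degree $d_1+d_2-2$, the restriction $F|_{C(F)}$ is an immersion with only ordinary double points away from a finite cusp set, the discriminant $\Delta(F)=F(C(F))$ has only nodes and ordinary cusps as singularities, and $F$ is generic at infinity. For such $F$ I would compute the two counts by intersection theory. The cusps form the zero locus on $C(F)$ of the natural section detecting the $\Sigma^{1,1}$ behaviour of $F$, i.e.\ the failure of $F|_{C(F)}$ to be a second-order immersion; a Chern-class/Thom-polynomial computation on $C(F)$, combined with the behaviour at infinity, should give $\#\{\text{cusps of }F\}=d_1^2+d_2^2+3d_1d_2-6d_1-6d_2+7=c(G)$. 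The nodes I would count by the double-point formula applied to $F|_{C(F)}\colon C(F)\to\C^2$: the number of affine nodes is one half of the virtual number of double points of this map, corrected for the cusps and for the points of $\overline{\Delta(F)}$ lying on the line at infinity, and this last correction is exactly where $\gcd(d_1,d_2)$ enters, as it governs how $\overline{\Delta(F)}$ meets the line at infinity. Carrying this out should yield $\#\{\text{nodes of }F\}=n(G)$.

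Next I would establish the two bounds: for an arbitrary $G\in\Omega_2(d_1,d_2)$ the number of cusps is $\le c(G)$ and the number of nodes is $\le n(G)$ (with the convention that a count is $+\infty$ when the corresponding locus is not finite), and that equality holds for both \emph{simultaneously} precisely when $G\in U$. The mechanism is that the virtual cusp number and the virtual node number are deformation invariants: proper pushforwards of the relevant $0$-cycles from $C(G)$, respectively from the double-point locus, on a suitable compactification, have total length $c(G)$, respectively $n(G)$, independently of $G$ — which follows from the previous step applied to a generic deformation of $G$. When $G\notin U$, part of this length is absorbed either by a singularity of $C(G)$, or by a non-ordinary singularity of $\Delta(G)$, or by a degeneration at infinity, and in each case — using the description of the strata of $\Omega_2(d_1,d_2)\setminus U$ from \cite{fjr} — at least one of the two visible counts drops strictly below its generic value; conversely on $U$ both counts equal their generic values by the computation above.

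Finally I would conclude. Since ``$G$ has exactly $c(G)$ cusps'' and ``$G$ has exactly $n(G)$ nodes'' are Zariski-constructible conditions (finiteness of the locus together with a length condition), the upper semicontinuity just established shows that the set of $G$ with $c(G)$ cusps and $n(G)$ nodes equals $U$, hence is Zariski open; being then Euclidean open, every small deformation $G_t$ of such a $G$ again lies in $U$, so by \cite{fjr},\cite{jel} the mapping $G$ is locally stable and $G_t$ is topologically equivalent to $G$, i.e.\ $G$ is topologically stable in $\Omega_2(d_1,d_2)$; as $U$ is exactly the locus of mappings with generic topological type, the last assertion follows. I expect the main obstacle to be the middle step, and inside it the analysis at infinity: one must fix the correct compactification of $F$ (for instance the morphism induced on a resolution of the rational map $\mathbb{P}^2\dashrightarrow\mathbb{P}^2$, or on the closure of its graph) and track carefully how cusps and nodes can collide, merge into worse singularities, or escape to infinity — the $\gcd(d_1,d_2)$ term in $n(G)$ being the visible trace of this arithmetic delicacy — and then verify that every stratum of the complement of $U$ forces a strict decrease of the cusp count or of the node count.
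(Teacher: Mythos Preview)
There is a genuine gap in your strategy, and it lies precisely where you suspect — but the difficulty is of a different nature than you indicate.

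Your plan is to show that the maximal-count locus $V:=\{G: c(G)=c(d_1,d_2),\ n(G)=n(d_1,d_2)\}$ coincides with the Zariski open set $U$ from \cite{fjr},\cite{jel}, and then to quote the topological triviality of $U$. The inclusion $U\subseteq V$ is indeed Theorem~\ref{tw1}. But the reverse inclusion $V\subseteq U$ is \emph{not} what is being proved, and may well be false as stated: the set $U$ in \cite{fjr} is produced by imposing a specific finite list of genericity conditions, and there is no reason why that particular $U$ should exhaust all mappings with generic topological type. What you actually need is that every $G\in V$ is topologically equivalent to \emph{some} member of $U$; and for this it is not enough to know that $V$ is open and that $U$ is dense in it. Openness of $V$ plus connectedness would give the conclusion only if you already knew the topological type is locally constant on $V$ — which is exactly the topological stability you are trying to prove. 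So invoking \cite{fjr},\cite{jel} at this point is circular.

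The paper avoids this circularity by proving topological triviality over $V$ directly. The work you describe in your ``middle step'' (essentially the content of Lemma~\ref{lem_max_implikacje}: maximal counts force $C(F)$ smooth of top degree, $F$ proper, only folds/cusps/nodes, good behaviour at infinity) is genuinely needed, but it is only preparatory. The decisive step, which your proposal does not contain, is a global isotopy argument for the \emph{non-proper} family $\Phi:(p,x)\mapsto(p,F_p(x))$ over $V$: one builds a Whitney stratification of $V\times\C^2$ using $\Delta(\Phi)$ and $\Sing\Delta(\Phi)$, and then applies a Thom-type first isotopy lemma for non-proper maps. The whole difficulty is that the projection to $V$ is not proper, so one must verify that the set $K_\infty$ of asymptotic stratified critical values is empty; this is a concrete estimate on the Rabier function along sequences going to infinity in $C(F_p)$, and it is exactly here that the conditions extracted from the maximal counts (in particular the control of $\overline{C(F)}$ and of $V(f_1)\cap V(J(F))$ at infinity, where the $\gcd(d_1,d_2)$ phenomenon you mention enters) are actually used. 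Your outline has no mechanism to produce a global homeomorphism between $G$ and a nearby generic map, and that mechanism is the heart of the proof.
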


\begin{co}
Let $F,G\in \Omega_2(d_1,d_2)$, where $d_1d_2>2$. Assume that $F$ and $G$ have $$d_1^2+d_2^2+3d_1d_2-6d_1-6d_2+7$$ cusps and $$\frac{1}{2}\left[(d_1d_2-4)((d_1+d_2-2)^2-2)-(\gcd(d_1,d_2)-5)(d_1+d_2-2)-6\right]$$ nodes. Then there exist homeomorphisms $\Phi,\Psi:\C^2\to \C^2$ such that $$G=\Phi\circ F\circ \Psi.$$
\end{co}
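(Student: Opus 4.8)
The plan is to derive the corollary from Theorem \ref{Thm:main} together with the result of \cite{jel}, \cite{fjr} that there is a Zariski open dense subset $U\subset\Omega_2(d_1,d_2)$ all of whose members are pairwise topologically equivalent. Since $F$ has exactly $d_1^2+d_2^2+3d_1d_2-6d_1-6d_2+7$ cusps and the prescribed number of nodes, Theorem \ref{Thm:main} applies to $F$ and shows that $F$ is topologically stable in $\Omega_2(d_1,d_2)$; the same applies to $G$.

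Next I would use topological stability to locate $F$ inside $U$ up to topological equivalence. By the definition of topological stability, every sufficiently small deformation $F_t\in\Omega_2(d_1,d_2)$ of $F$ is topologically equivalent to $F$, so the topological equivalence class of $F$ contains a full Euclidean neighborhood of $F$ in $\Omega_2(d_1,d_2)\cong\C^N$; in particular it has nonempty Euclidean interior. On the other hand $U$, being Zariski open and dense, is the complement of a proper algebraic subset of $\C^N$ and hence Euclidean dense, so it meets every nonempty Euclidean open set. Therefore the equivalence class of $F$ contains some $F_0\in U$, i.e. $F$ has generic topological type; the same reasoning produces $G_0\in U$ topologically equivalent to $G$.

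Finally, $F_0$ and $G_0$ are topologically equivalent because both lie in $U$, so by transitivity of topological equivalence $F$ and $G$ are topologically equivalent as self-maps of $\C^2$, which means there are homeomorphisms $\Phi,\Psi\colon\C^2\to\C^2$ with $G=\Phi\circ F\circ\Psi$ (passing to $\Psi^{-1}$ if one prefers the convention $G\circ\Psi=\Phi\circ F$), as asserted. The argument carries no genuine obstacle: its whole weight rests on Theorem \ref{Thm:main} and on the construction of $U$ in \cite{jel}, \cite{fjr}. The only steps needing a word of care are that topological stability forces the equivalence class to contain a Euclidean neighborhood of the given map, which is immediate from the definition, and that a Zariski dense subset of $\C^N$ is Euclidean dense, which is standard; together these guarantee that the interiors of the equivalence classes of $F$ and of $G$ must intersect $U$.
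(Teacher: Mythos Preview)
Your argument is correct and is essentially the derivation the paper intends: the corollary is stated without proof immediately after Theorem \ref{Thm:main}, and the reader is expected to combine that theorem with the existence of the Zariski open dense set $U$ from \cite{jel}, \cite{fjr} exactly as you do. The one point worth making explicit is that the proof of Theorem \ref{Thm:main} actually yields something slightly stronger than mere topological stability of a single $G$: it shows $K(\pi,S)=\emptyset$ over all of $U_2$, so $\pi$ is locally trivial over the connected set $U_2$ (resp.\ $U_1$), which directly gives that any two mappings in $U_1$ are topologically equivalent without passing through the auxiliary set $U$; but your route via the Euclidean-interior/Zariski-density argument is equally valid and uses only the statement of Theorem \ref{Thm:main}.
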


\begin{re}
In Definition \ref{dfGenCus} we introduce the notion of a generalized cusp. In Theorem \ref{Thm:main} we may count generalized cusps instead of cusps. If there are $d_1^2+d_2^2+3d_1d_2-6d_1-6d_2+7$ of them, then in fact they all must be cusps (see Corollary \ref{cor_GCInd2}).

Similarly, in Definition \ref{df_GenNode} we introduce the notion of a generalized node. If the number of generalized nodes is as in Theorem \ref{Thm:main} then they all must be nodes (see Corollary \ref{disc2}).
\end{re}

\begin{re}
If we consider polynomial mappings $F:\C^2\to\C^n$ for $n>2$ then it is not always true that mappings with the generic topological type form an open subset in $\Omega_2(d_1,\ldots,d_n)$ (see \cite{fjm}).
\end{re}

This theorem gives positive answer to Conjecture 1  and  to Conjecture 2 from \cite{fj}. 
Moreover, since for a given mapping $F\in \Omega_2(d_1,d_2)$ we are able to compute the number of cusps and nodes effectively, this characterization allows us to find effectively a mapping $F\in \Omega_2(d_1,d_2)$ with a generic topological type using an algorithm of probabilistic type
(see the examples in Section \ref{secEx}).

\section{Preliminary Results}\label{secGC}

In this section our aim is to estimate the number of cusps and nodes of a mapping $F\in \Omega_2(d_1,d_2)$ with a reduced Jacobian. Throughout the paper we will assume that $d_1\geq d_2$. Moreover, we exclude the trivial case when $d_1d_2\leq 2$ (for such $d_1,d_2$ all mappings have $0$ cusps and $0$ nodes). For $F=(f,g)\in \Omega_2(d_1,d_2)$ we denote
$$J(F):=f_xg_y-f_yg_x,$$
$$J_{1,1}(F):=(f_{xx}g_y+f_xg_{xy}-f_{xy}g_x-f_yg_{xx})f_y-(f_{xy}g_y+f_xg_{yy}-f_{yy}g_x-f_yg_{xy})f_x,$$
and
$$J_{1,2}(F):=(f_{xx}g_y+f_xg_{xy}-f_{xy}g_x-f_yg_{xx})g_y-(f_{xy}g_y+f_xg_{yy}-f_{yy}g_x-f_yg_{xy})g_x.$$
If $F$ is a generically finite mapping, then we consider the critical set $C(F):=\{J(F)=0\}$ and the discriminant $\Delta (F):=F(C(F))$.
In the sequel the following notions play a crucial role:

\begin{defi}
Let $F\colon (\C^2,a)\to (\C^2,F(a))$ be a holomorphic mapping.  We say
that $F$ has a \emph{fold} at $a$ if $F$ is biholomorphically
equivalent to the mapping $(\C^2,0)\ni (x,y)\mapsto (x, y^2)\in
(\C^2,0)$. Moreover, we say
that $F$ has a \emph{cusp} at $a$ if $F$ is biholomorphically
equivalent to the mapping $(\C^2,0)\ni (x,y)\mapsto (x, y^3+xy)\in
(\C^2,0)$. By a \emph{node} we mean the multisingularity which consists of exactly two transversal folds: $\begin{cases} (x_1,y_1)\mapsto (x_1^2,y_1)
\\ (x_2,y_2)\mapsto (x_2,y_2^2)\end{cases}$\!.
\end{defi}

First we recall some basic facts and definitions from \cite{fjr}:

\begin{theo}[\cite{fjr}, Theorem 4.8]\label{tw1}
There is a Zariski open, dense subset $U\subset \Omega_2(d_1,d_2)$
such that for every mapping $F\in U$ the discriminant $\Delta(F)=F(C(F))$ has only cusps and nodes as singularities. 
The number of cusps is equal to
$$c(d_1,d_2)=d_1^2+d_2^2+3d_1d_2-6d_1-6d_2+7$$
and the number of nodes is equal to
$$n(d_1,d_2)=\frac{1}{2}\left[(d_1d_2-4)((d_1+d_2-2)^2-2)-(\gcd(d_1,d_2)-5)(d_1+d_2-2)-6\right].$$
\end{theo}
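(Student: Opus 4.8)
The plan is to establish the two halves of the statement in turn; throughout $d_1\ge d_2$ and $f_{d_1},g_{d_2}$ denote the leading forms of $f,g$. First, one singles out the ``good'' conditions on a map $F=(f,g)$: (a) $\deg f=d_1$, $\deg g=d_2$, the forms $f_{d_1},g_{d_2}$ have no common zero, and the leading form of $J(F)$ (a binary form of degree $d_1+d_2-2$) is reduced; (b) $C(F)=\{J(F)=0\}$ is a smooth affine curve, so that by (a) its closure $\overline{C(F)}\subset\mathbb{P}^2$ is a smooth plane curve of degree $d_1+d_2-2$ meeting the line at infinity $\ell_\infty$ transversally; (c) $F|_{C(F)}$ is an immersion outside finitely many points, at each of which it has a cusp — equivalently $T_aC(F)=\ker d_aF$ with simple tangency; (d) the only multiple points of $\Delta(F)$ are ordinary nodes, each the transverse image of exactly two fold points of $C(F)$. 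Each of (a)--(d) is the non-vanishing of an explicit resultant or discriminant in the coefficients of $F$, so the locus $U\subset\Omega_2(d_1,d_2)$ on which all of them hold is Zariski open; it is nonempty, hence dense, since the conditions are simultaneously realisable — this qualitative statement (the analogue of Whitney's theorem for the family $\Omega_2(d_1,d_2)$) is obtained by a jet- and multijet-transversality argument for this finite-dimensional family, carried out also along $\ell_\infty$. Note finally that on $U$, since $f_{d_1}$ and $g_{d_2}$ have no common zero, $\|F\|\to\infty$ along every branch of $C(F)$ running to infinity, so $F(C(F))$ is already closed in $\C^2$ and $\Delta(F)$ has there precisely the cusps and nodes of (c), (d).

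For the number of cusps: by (c), $a\in C(F)$ is a cusp exactly when $T_aC(F)=\ker d_aF$, equivalently $J_{1,1}(F)(a)=0$ at points where $(f_x,f_y)(a)\ne 0$ and $J_{1,2}(F)(a)=0$ at points where $(g_x,g_y)(a)\ne 0$. Hence the cusps are precisely the points of $C(F)\cap\{J_{1,1}(F)=0\}$ lying off $\{f_x=f_y=0\}$; the points of $\{f_x=f_y=0\}$ (which all lie on $C(F)$) are folds, not cusps, because $J_{1,2}(F)$ does not vanish there. For $F\in U$ this intersection is proper, reduced, and contained in $\C^2$, so B\'ezout in $\mathbb{P}^2$ — with $\deg J(F)=d_1+d_2-2$ and $\deg J_{1,1}(F)=2d_1+d_2-4$ — together with the $(d_1-1)^2$ transverse intersection points along $\{f_x=f_y=0\}$, gives the number of cusps as
$$(d_1+d_2-2)(2d_1+d_2-4)-(d_1-1)^2=d_1^2+d_2^2+3d_1d_2-6d_1-6d_2+7.$$

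For the number of nodes I would use the genus formula for the closure $\overline\Delta\subset\mathbb{P}^2$ of $\Delta(F)$. The morphism $\phi\colon\overline{C(F)}\to\mathbb{P}^2$ extending $F|_{C(F)}$ is birational onto $\overline\Delta$ (folds and cusps are injective), so $\overline{C(F)}$ is the normalisation of the irreducible plane curve $\overline\Delta$, which therefore has geometric genus $g=\binom{d_1+d_2-3}{2}$. Pulling back a generic line to the curve $\{\alpha f+\beta g=\gamma\}$ of degree $d_1$, disjoint from $\overline{C(F)}$ on $\ell_\infty$ by (a), B\'ezout gives $\deg\overline\Delta=d_1(d_1+d_2-2)$. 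It remains to locate the singularities of $\overline\Delta$ over the line at infinity of the target: if $d_1=d_2$ the $d_1+d_2-2$ points of $\overline{C(F)}$ at infinity map to as many distinct smooth points of $\overline\Delta$, contributing nothing; if $d_1>d_2$ they all map to the single point $[0:1:0]$, each branch there having a local parametrisation of the shape $\xi\sim s^{d_1}$, $\eta\sim s^{d_1-d_2}$, so the $\delta$-invariant $\beta$ of $\overline\Delta$ at $[0:1:0]$ equals $(d_1+d_2-2)$ times the $\delta$-invariant of one such branch plus $\binom{d_1+d_2-2}{2}$ times their common pairwise intersection number $d_1(d_1-d_2)$ — and it is the single-branch contribution, controlled by the Euclidean data of the exponent pair $(d_1-d_2,\,d_1)$, in which $\gcd(d_1,d_2)=\gcd(d_1,d_1-d_2)$ enters. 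Feeding $\deg\overline\Delta$, $g$, the cusp number and $\beta$ into the genus formula for the irreducible plane curve $\overline\Delta$,
$$\binom{d_1(d_1+d_2-2)-1}{2}=g+c(d_1,d_2)+n(d_1,d_2)+\beta,$$
and solving for $n(d_1,d_2)$, one obtains after simplification
$$n(d_1,d_2)=\frac{1}{2}\left[(d_1d_2-4)\bigl((d_1+d_2-2)^2-2\bigr)-(\gcd(d_1,d_2)-5)(d_1+d_2-2)-6\right].$$

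The step I expect to be the genuine obstacle is the genericity part: upgrading ``$\Delta(F)$ generically has only cusps and nodes'' from an analytically semicontinuous statement to a truly \emph{Zariski}-open one forces one to realise each forbidden coincidence as the vanishing of a concrete polynomial in the coefficients of $F$, uniformly and including the behaviour at $\ell_\infty$. Once that is available the cusp count and the degree of $\overline\Delta$ are routine B\'ezout bookkeeping; the one remaining delicate computation is the local Puiseux analysis of $\overline\Delta$ at $[0:1:0]$ for $d_1>d_2$, which is what produces the $\gcd$ term and whose eventual collapse into the stated closed form, though elementary, is not transparent in advance.
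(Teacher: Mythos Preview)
The paper does not prove this theorem: it is quoted as Theorem~4.8 of \cite{fjr} and used throughout Section~\ref{secGC} as an imported result, with no proof given here. So there is no proof in the present paper to compare against.

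That said, your sketch is essentially the argument of \cite{fjr}, and the pieces you isolate are the right ones. The cusp count via B\'ezout on $V(J,J_{1,1})$ minus $V(f_x,f_y)$ is exactly the mechanism underlying Definition~\ref{dfGenCusIn} and Theorem~\ref{tw} in this paper (the index $\mu_a=\dim\mathcal O_a/(J,J_{1,1})-\dim\mathcal O_a/(f_x,f_y)$), and your arithmetic $(d_1+d_2-2)(2d_1+d_2-4)-(d_1-1)^2$ is correct. One point you pass over: you need each of the $(d_1-1)^2$ points of $V(f_x,f_y)$ to lie on $V(J,J_{1,1})$ with intersection multiplicity exactly~$1$, and $V(J,J_{1,1})$ to have no points on $\ell_\infty$. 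The first holds generically because at such a point $dJ$ and $dJ_{1,1}$ are linear combinations of $df_x,df_y$ with determinant $f_{xx}g_y^2-2f_{xy}g_xg_y+f_{yy}g_x^2$, but it deserves a sentence.

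For the nodes, your genus-formula route is legitimate, but you do not actually carry out the $\delta$-invariant at $[0{:}1{:}0]$ when $d_1>d_2$; saying that the $\gcd$ term emerges from the Puiseux data of $(s^{d_1-d_2},s^{d_1})$ is correct in spirit but is precisely the computation that needs to be done. It is worth noting that the present paper, in the proof of Lemma~\ref{lem_max_implikacje}(7), obtains the same formula by a different and more algebraic route: it counts isolated zeros of $(J(F)(x),J(F)(y),F(x)-F(y))$ on $\C^2\times\C^2$ off the diagonal, subtracts the diagonal contribution (which yields the $2c(d_1,d_2)$ term) and the points at infinity $(0{:}x_1{:}x_2{:}\varepsilon x_1{:}\varepsilon x_2)$ with $\varepsilon^{\gcd(d_1,d_2)}=1$ (which yields the $\gcd$ term). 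That computation avoids the local Puiseux analysis entirely and makes the appearance of $\gcd(d_1,d_2)$ transparent; your approach is more geometric but leaves the hardest step unwritten.
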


Let us note that for every generic mapping $F\in \Omega_2(d_1,d_2)$ we have $\deg C(F)=R:=d_1+d_2-2$ and $\deg \Delta(F)=D:=(d_1+d_2-2)d_1$.
Hence for every mapping $F\in \Omega_2(d_1,d_2)$ we have $\deg C(F)=R_F\le R$ and $\deg \Delta(F)=D_F\le D$. 

\begin{defi}\label{dfGenCus}
Let $F \colon (\C^2,a)\to (\C^2,F(a))$ be a germ of a holomorphic mapping. We say
that $F$ has a \emph{generalized cusp} at $a$ if $F_a$ is proper, the
curve $J(F)=0$ is reduced near $a$ and the discriminant of $F_a$
is not smooth at $F(a)$.
\end{defi}

\begin{re}
If $F_a$ is proper, $J(F)=0$ is reduced near $a$ and $J(F)$ is
singular at $a$ then it follows from Theorem 1.14 from \cite{jel} that
also the discriminant of $F_a$ is singular at $F(a)$ and hence
$F$ has a generalized cusp at $a$.
\end{re}

Now we introduce the index of generalized cusp:

\begin{defi}\label{dfGenCusIn}
Let $F=(f,g): (\C^2,a)\to (\C^2,F(a))$ be a germ of a holomorphic mapping.
Assume that $F$ has a generalized cusp at a point $a\in \C^2$.
Since the curve $J(F)=0$ is reduced near $a$, we have that the set
$\{\nabla f=0\}\cap \{\nabla g=0\}$ has only isolated points near
$a$. For a generic linear mapping $T\in GL(2)$, if
$F'=(f',g')=T\circ F$ then $\nabla f'$ does not vanish
identically on any branch of $\{J(F)=0\}$ near $a$. We say that
the cusp of $F$ at $a$ has an index $\mu_a:={\dim}_\C {\mathcal
O}_a/(J(F'), J_{1,1}(F'))-{\dim}_\C {\mathcal O}_a/(f'_x, f'_y)$.
\end{defi}

\begin{re}
Using the exact sequence $1.7$ from \cite{gm1} we see that
$$\mu_a={\dim}_\C {\mathcal O}_a/(J(F), J_{1,1}(F), J_{1,2}(F)).$$
Hence our index coincides with the classical local number of cusps
defined e.g. in \cite{gm1}.

Moreover, if $\mu_a(F)$ is a positive number for some germ $F_a$, then $F$ has a generalized cusp in $a$. It is easy to see that if $a$ is a simple cusp of $F$ then $\mu_a(F)=1$.
\end{re}

We have:

\begin{theo}[see {\cite[Theorem 7.6]{fjr}}]\label{tw}
Let $F=(f,g)\in \Omega_2(d_1,d_2)$. Assume that $F$ has a
generalized cusp at $a\in \C^2$. If $U_a\subset \C^2$ is a sufficiently
small ball around $a$ then $\mu_a$ is equal to the number of
simple cusps in $U_a$ of a mapping $F'$ where $F'\in \Omega_2(d_1,d_2)$ is a generic mapping, which is sufficiently
close to $F$ in the natural topology of $\Omega_2(d_1,d_2)$. 
\end{theo}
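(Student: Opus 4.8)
The statement asserts that the local cusp index $\mu_a$ of a generalized cusp of $F\in\Omega_2(d_1,d_2)$ equals the number of simple cusps appearing in a small ball $U_a$ when $F$ is perturbed to a nearby generic mapping $F'$. The natural strategy is a conservation-of-number argument for the zero scheme of the cusp ideal, combined with the earlier characterization (the Remark following Definition \ref{dfGenCusIn}) identifying $\mu_a$ with $\dim_\C \mathcal{O}_a/(J(F),J_{1,1}(F),J_{1,2}(F))$. First I would reduce to the case where, after composing with a generic $T\in GL(2)$ (which changes neither the topology nor the cusp count, and which by Definition \ref{dfGenCusIn} does not affect $\mu_a$), the polynomial $f'_x$ does not vanish identically on any branch of $\{J(F')=0\}$ near $a$; then $\mu_a = \dim_\C \mathcal{O}_a/(J(F'),J_{1,1}(F')) - \dim_\C\mathcal{O}_a/(f'_x,f'_y)$, and crucially the relevant intersection schemes are zero-dimensional near $a$, because $J(F')=0$ is reduced there so $\{\nabla f'=0\}\cap\{\nabla g'=0\}$ is a finite set near $a$.

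The heart of the argument is then the semicontinuity / conservation principle for the proper intersection number. Choose the ball $U_a$ small enough that $a$ is the only point of $\{\nabla f=0\}\cap\{\nabla g=0\}$ in $\overline{U_a}$ and the only point of the cusp scheme $V(J(F'),J_{1,1}(F'),J_{1,2}(F'))$ in $\overline{U_a}$; this is possible since all these sets are finite near $a$. For a generic perturbation $F'_s\to F'$ in $\Omega_2(d_1,d_2)$, the coefficients of $J(F'_s), J_{1,1}(F'_s), J_{1,2}(F'_s), f'_x, f'_y$ vary continuously, so by the standard properness/continuity of intersection multiplicities (e.g. the fact that the sum of local intersection numbers over $U_a$ is constant under small deformation when there are no zeros on the boundary $\partial U_a$), the total number of points of the perturbed cusp scheme inside $U_a$, counted with multiplicity, equals $\mu_a$. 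For a \emph{generic} nearby mapping, Theorem \ref{tw1} (and the Remark after Definition \ref{dfGenCusIn} noting that a simple cusp has index $1$) guarantees that every cusp of $F'_s$ is a simple cusp, hence each contributes exactly $1$ to that count; therefore the number of simple cusps of $F'_s$ in $U_a$ is exactly $\mu_a$. The same boundary-freeness must be checked for the subtracted scheme $V(f'_x,f'_y)$ so that the two pieces of the index deform compatibly; this is where one uses that the generic linear change $T$ was chosen to keep $f'_x$ not identically zero on branches of $J=0$, ensuring the intersection $V(f'_x,f'_y)$ stays zero-dimensional and its multiplicity is preserved.

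The main obstacle I anticipate is verifying that the deformation does not "lose" intersection points through the boundary or into the set where the Jacobian curve is non-reduced — i.e., establishing the required properness uniformly in the deformation parameter. Concretely, one must argue that for $F'_s$ sufficiently close to $F'$, the schemes $V(J(F'_s),J_{1,1}(F'_s),J_{1,2}(F'_s))$ and $V(f'_x,f'_y)$ remain finite and have no points on $\partial U_a$; this follows from upper semicontinuity of fibre dimension for the family over the deformation space together with the fact that the limit schemes are finite in $\overline{U_a}$, but it needs to be stated carefully. Once properness is in hand, the conservation of number is standard and the identification with the count of simple cusps is immediate from Theorem \ref{tw1}. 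An alternative, perhaps cleaner, route is to invoke directly Theorem 7.6 of \cite{fjr} as cited and merely transcribe its proof to the present normalization; but the self-contained argument above via the cusp ideal $(J,J_{1,1},J_{1,2})$ and semicontinuity of Milnor-type numbers is the approach I would present.
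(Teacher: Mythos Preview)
Your approach is essentially the paper's: reduce via a generic $T\in GL(2)$ so that $\nabla f$ does not vanish on any branch of $\{J=0\}$ near $a$, then apply a Rouch\'e/conservation-of-number argument to the two maps $\Phi=(J,J_{1,1})$ and $\Psi=(f_x,f_y)$ from $\C^2$ to $\C^2$, and subtract. The paper carries this out exactly, citing the Rouch\'e theorem for $d_a(\Phi)$ and $d_a(\Psi)$.

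There is, however, one genuine step that you gloss over and that the paper makes explicit. You write that ``the total number of points of the perturbed cusp scheme inside $U_a$, counted with multiplicity, equals $\mu_a$'', apparently applying conservation directly to the three-generator ideal $(J,J_{1,1},J_{1,2})$. That is not automatic: conservation of length under deformation is a statement about the local degree of a map $\C^n\to\C^n$, and $(J,J_{1,1},J_{1,2})$ is not such a map. The paper avoids this by working only with the two complete-intersection maps $\Phi$ and $\Psi$, for which Rouch\'e applies cleanly, and then observing the key set-theoretic fact: for a \emph{generic} $F_i$, every zero of $\Phi_i=(J(F_i),J_{1,1}(F_i))$ is either a simple cusp of $F_i$ or a simple zero of $\Psi_i=\nabla f_i$, and these two kinds of zeros are disjoint. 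Hence (number of cusps in $U_a$) $=d_a(\Phi)-d_a(\Psi)=\mu_a$. Your sentence about ``the two pieces of the index deform[ing] compatibly'' hints at the subtraction but does not supply this identification; without it, one cannot conclude that the subtraction counts exactly the cusps rather than some other residual set. The paper also justifies $d_a(\Phi)<\infty$ by noting that if $J_{1,1}$ vanished on a branch of $\{J=0\}$ then $F$ would contract that branch, contradicting properness of the germ $F_a$; you should include this, since it is what rules out the ``loss through non-reducedness'' you worry about.

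Finally, invoking \cite[Theorem~7.6]{fjr} as an ``alternative route'' is circular: that is precisely the theorem being re-proved here.
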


\begin{proof}
We can assume that $\nabla f$ does not vanish identically on any
branch of $\{J(F)=0\}$ near $a$. In particular we have
$\dim \mathcal{O}_a/(f_x,f_y)=\dim\mathcal{O}_a/(J(F),f_x,f_y)<\infty$.

Let $F_i=(f_i, g_i)\in\Omega_2(d_1,d_2)$ be a sequence of
generic mappings, which is convergent to $F$. Consider the
mappings $\Phi=(J(F), J_{1,1}(F))$, $\Phi_i=(J(F_i),
J_{1,1}(F_i))$, $\Psi=(\nabla f)$ and $\Psi_i=(\nabla f_i)$. Obviously
$\Phi_i\to \Phi$ and $\Psi_i\to \Psi$.

Since $a$ is a cusp of $F$ we have $\Phi(a)=0$. Moreover
$d_a(\Phi)<\infty,$ where $d_a(\Phi)$ denotes the local
topological degree of $\Phi$ at $a$. Indeed, if $J_{1,1}(F)=0$ on
some branch $B$ of the curve $J(F)=0$ then the rank of $F_{|B}$
would be zero and by Sard theorem $F$ has to contract $B$, which
is a contradiction ($F_a$ is proper). By the Rouche Theorem (see \cite{cir}, p. 86), we
have that for large $i$ the mapping $\Phi_i$ has exactly
$d_a(\Phi)$ zeroes in $U_a$ and $\Psi_i$ has exactly
$d_a(\Psi)$ zeroes in $U_a$ (counted with multiplicities, if
$\Psi(a)\not=0$ we put $d_a(\Psi)=0$). However, the mappings $F_i$
are generic, in particular all zeroes of $\Phi_i$ and $\Psi_i$ are
simple. Moreover the zeroes of $\Phi_i$ which are not cusps of $F_i$ are
zeroes of $\Psi_i$.
Hence $\mu_a=d_a(\Phi)-d_a(\Psi)$ is indeed the number of
simple cusps of $F_i$ in $U_a$.
\end{proof}

\begin{co}\label{cor_GCInd}
Let $F\in \Omega_2(d_1,d_2)$. Assume that $F$ is proper and has a reduced Jacobian. Then $F$ has only folds and generalized cusps as mono-singularities.
Moreover if $a_1,\ldots,a_m$ are generalized cusps, then $\sum^m_{i=1} \mu_{a_i}(F)\le c(d_1,d_2)=d_1^2+d_2^2+3d_1d_2-6d_1-6d_2+7$.
\end{co}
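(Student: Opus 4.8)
The plan is to handle the two statements in turn: the first by a purely local classification at each point of $C(F)$, the second by approximation together with the global count already quoted.

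For the mono-singularities, I would fix an arbitrary point $a\in C(F)$ and argue by a dichotomy on the local geometry of the discriminant. Since $F$ is proper the germ $F_a$ is proper, and since $J(F)$ is reduced the curve $\{J(F)=0\}$ is reduced near $a$; these are precisely the standing hypotheses of Definition \ref{dfGenCus}. If $\Delta(F)$ is not smooth at $F(a)$, then by that definition $F_a$ is a generalized cusp and there is nothing more to prove. If $\Delta(F)$ is smooth at $F(a)$, then the Remark following Definition \ref{dfGenCus} (Theorem $1.14$ of \cite{jel}) forces $\{J(F)=0\}$ to be smooth at $a$, since a singularity of $C(F)$ at $a$ would produce a singularity of $\Delta(F)$ at $F(a)$. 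Hence in this case $C(F)$ is a smooth curve germ at $a$, the differential $dF_a$ has rank exactly one (it cannot have rank zero, for then $J(F)$ would vanish to order $\ge 2$ at $a$ and $C(F)$ would be singular there), and $F|_{C(F)}$ maps the smooth germ $C(F)$ onto the smooth germ $\Delta(F)$. It then follows from the holomorphic fold normal form — equivalently, from the local description of proper generically finite map-germs with reduced Jacobian in \cite{jel} — that $F_a$ is biholomorphically equivalent to $(x,y)\mapsto(x,y^2)$, i.e. $F$ has a fold at $a$. Since at a point outside $C(F)$ the map $F$ is a local biholomorphism, this shows that folds and generalized cusps are the only mono-singularities of $F$.

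For the inequality I would approximate $F$ by generic mappings. Choose pairwise disjoint balls $U_{a_1},\dots,U_{a_m}$ with $a_i\in U_{a_i}$, small enough that the conclusion of Theorem \ref{tw} is available on each. Since "being generic" and "belonging to the set $U$ of Theorem \ref{tw1}" are both Zariski open and dense conditions on $\Omega_2(d_1,d_2)$, their intersection is again Zariski open and dense, so we may pick $F'\in U$ generic and arbitrarily close to $F$; in particular close enough that Theorem \ref{tw} applies simultaneously in every $U_{a_i}$. By Theorem \ref{tw}, $F'$ has exactly $\mu_{a_i}(F)$ simple cusps inside $U_{a_i}$, so — the balls being disjoint — $F'$ has at least $\sum_{i=1}^m\mu_{a_i}(F)$ cusps altogether. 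On the other hand $F'\in U$, so by Theorem \ref{tw1} it has precisely $c(d_1,d_2)=d_1^2+d_2^2+3d_1d_2-6d_1-6d_2+7$ cusps. Combining the two bounds gives $\sum_{i=1}^m\mu_{a_i}(F)\le c(d_1,d_2)$.

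I expect the delicate point to be the local classification in the second paragraph, namely deducing "fold" from "discriminant smooth at $F(a)$": one must use the reducedness of $J(F)$ to exclude both corank-two points of $F_a$ and a non-immersive (multiple-covering) behaviour of $F|_{C(F)}$ that would still leave a smooth image, and then invoke the fold normal form. The approximation argument is routine once Theorems \ref{tw} and \ref{tw1} are in hand; the only step worth spelling out there is that the generic approximant can be taken inside the set $U$ of Theorem \ref{tw1}, so that its total cusp count is exactly $c(d_1,d_2)$.
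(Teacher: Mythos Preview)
Your proposal is correct and follows essentially the same approach as the paper: the same dichotomy on smoothness of the discriminant for the first claim, and the same approximation by generic mappings via Theorems~\ref{tw} and~\ref{tw1} for the inequality. The paper's treatment of the smooth-discriminant case is slightly more direct: it invokes the normal-form result from \cite{adv} (not \cite{jel}) to obtain $(x,y)\mapsto(x^k,y)$ immediately from properness and smooth discriminant, and then reads off $k=2$ from reducedness of $J(F)$---so your intermediate step establishing smoothness of $C(F)$ via Theorem~1.14 of \cite{jel} is not needed, though it is not wrong.
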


\begin{proof}
Indeed, let $p$ be a singular point of $F$. Denote by $\Delta$ the discriminant of $F$. We have two cases: 
\begin{enumerate}
\item $\Delta_p$ is a smooth germ,
\item $\Delta_p$ is not a smooth germ.
\end{enumerate}

In the first case by \cite{adv} we have that the germ $(F,p)$ is equivalent to the germ of the mapping $(x,y)\to (x^k,y)$. Since the Jacobian of $F$ is reduced we have $k=2$.

In the second case $F$ has a generalized cusp at $p$ by definition.
Now it is enough to apply Theorem \ref{tw} and Theorem \ref{tw1}.
\end{proof}

We have the following useful characterization. 

\begin{theo}\label{cusp}
Let $F\in \Omega_2(d_1,d_2)$ and assume that $F$ has generalized cusp at $a\in\C^2$. Then $F$ has a cusp at $a$ if and only if $\mu_a(F)=1$.
\end{theo}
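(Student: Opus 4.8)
The plan is to prove both implications by reducing to a normal form. The ``only if'' direction is essentially the last remark before the theorem: if $F$ has a genuine cusp at $a$, then $F_a$ is biholomorphically equivalent to $(x,y)\mapsto(x,y^3+xy)$, and a direct computation of $J(F)$, $J_{1,1}(F)$, $J_{1,2}(F)$ for this model gives $\dim_\C \mathcal{O}_a/(J,J_{1,1},J_{1,2})=1$; by the invariance of $\mu_a$ under biholomorphic changes of source and target (which follows from Definition \ref{dfGenCusIn} together with the Remark identifying $\mu_a$ with the classical local cusp number), we get $\mu_a(F)=1$. So the substance is the ``if'' direction.

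For the ``if'' direction, suppose $F$ has a generalized cusp at $a$ with $\mu_a(F)=1$. First I would normalize: after a generic $T\in GL(2)$ on the target we may assume $\nabla f$ does not vanish identically on any branch of $\{J(F)=0\}$ near $a$, so $\dim_\C \mathcal{O}_a/(f_x,f_y)<\infty$ and $d_a(\Psi)$ in the proof of Theorem \ref{tw} is well-defined. By Theorem \ref{tw}, for a generic mapping $F'$ close to $F$ there are exactly $\mu_a(F)=1$ simple cusps in a small ball $U_a$ around $a$, and (since $F'$ is generic and close) the only other singularities of $F'$ in $U_a$ are folds, with the discriminant of $F'$ in $F'(U_a)$ having a single ordinary cusp and no other singular point. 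Thus, shrinking $U_a$, the discriminant $\Delta(F)$ near $F(a)$ is the limit of irreducible curve germs each with a single cusp, and one shows $\Delta(F)_{F(a)}$ is itself irreducible with an ordinary ($A_2$) cusp singularity: the local cusp number being $1$ forces the $\delta$-invariant and the branching type to be exactly that of $y^3=x^2$, because any worse singularity or higher multiplicity would, upon the generic perturbation, produce at least two cusps (e.g. a local cusp number $\geq 2$) by the conservation-of-multiplicity argument underlying Theorem \ref{tw}.

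Once $\Delta(F)_{F(a)}$ is an ordinary cusp and the Jacobian $\{J(F)=0\}$ is reduced near $a$ (it is, by Definition \ref{dfGenCus}) and is smooth at $a$ — because if $J(F)$ were singular at $a$ then, $J(F)$ being reduced, the normalization of $C(F)$ near $a$ would have at least two branches mapping to $\Delta(F)$, contradicting irreducibility of the cusp, or would force a higher local cusp number — I would invoke the classification of stable map-germs of the plane: a proper germ $(\C^2,a)\to(\C^2,F(a))$ with $1$-dimensional critical locus that is smooth, mapping onto an ordinary cusp, and which is finitely determined, must be $\mathcal{A}$-equivalent to the cusp normal form $(x,y)\mapsto(x,y^3+xy)$ (this is the complex-analytic analogue of Whitney's theorem, cf. the use of \cite{adv} in Corollary \ref{cor_GCInd}, now in the non-smooth-discriminant case). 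Hence $F$ has a cusp at $a$.

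The main obstacle I expect is the middle step: rigorously upgrading ``$\mu_a(F)=1$'' to ``$\Delta(F)_{F(a)}$ is an ordinary cusp and $C(F)$ is smooth at $a$.'' This is where one must use, in a precise way, that $\mu_a$ equals the classical local cusp number and that it is conserved under deformation (Theorem \ref{tw}), together with the fact that any generalized cusp whose Jacobian is singular at the point, or whose discriminant has a tacnode/higher singularity, carries local cusp number at least $2$; quantifying this — presumably via the formula $\mu_a=\dim_\C\mathcal{O}_a/(J,J_{1,1},J_{1,2})$ and a case analysis on the possible local forms of a generalized cusp — is the technical heart of the argument.
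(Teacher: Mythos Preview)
Your proposal for the ``if'' direction has a real gap, and you identify it yourself: you never actually pass from $\mu_a(F)=1$ to ``$\Delta(F)_{F(a)}$ is an ordinary cusp and $C(F)$ is smooth at $a$.'' The limit argument you sketch (``$\Delta(F)$ is a limit of curves each with a single cusp, hence is an $A_2$ cusp'') is not valid as stated: singularity types can only worsen in limits, and the local cusp number $\mu_a$ controls only how many \emph{cusps} bifurcate from $a$, not whether nodes or other features also collapse into $F(a)$, so $\mu_a=1$ by itself does not pin down the analytic type of the discriminant germ. Your final appeal to a ``classification of stable map-germs'' then assumes exactly what is at stake, since you have not shown the germ is stable, and the reference \cite{adv} used in Corollary~\ref{cor_GCInd} only covers the smooth-discriminant (fold) case.

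The paper's argument is direct and bypasses all of this. First, $\mu_a=1$ forces $\rank dF(a)=1$: if $dF(a)=0$ then $1,x,y\notin (J,J_{1,1},J_{1,2})$, so $\mu_a\ge 3$. Hence one may write $F_a=(x,h(x,y))$ with $h(0,0)=0$, and in these coordinates a short computation gives
\[
\mu_a=\dim_\C \mathcal{O}_a/(h_y,h_{yy}).
\]
Thus $\mu_a=1$ means $h_y(a)=h_{yy}(a)=0$ while $\{h_y=0\}$ and $\{h_{yy}=0\}$ meet transversally at $a$, i.e.\ $h_{xy}(a)\neq 0$ and $h_{yyy}(a)\neq 0$. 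Weierstrass preparation then gives $h=v\cdot(y^3+a_1(x)y^2+a_2(x)y+a_3(x))$ with $v(0,0)\neq 0$, so $F_a$ is a degree-$3$ branched cover; the integral dependence relation from \cite{gun} yields a monic cubic in $y$ with coefficients holomorphic in $(x,h)$, and completing the cube produces $(y')^3+c(x,h)\,y'=d(x,h)$. The nonvanishing of $h_{xy}(a)$ and $h_{yyy}(a)$ is precisely what makes $(c,d)$ a coordinate change on the target and $(c(x,h),y')$ a coordinate change on the source, delivering the cusp normal form. No deformation argument or external classification theorem is needed; the ``case analysis on local forms'' you anticipate is replaced by this single explicit reduction.
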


\begin{proof}
Of course if $F$ has a cusp at $a$, then $\mu_a(F)=1$.

Conversely, assume that $\mu_a(F)=1$. We can assume that $a=(0,0)$ and $F(a)=(0,0)$. Notice that $F$ has rank $1$ at $a$. Indeed, otherwise we would have $1,x,y\not\in (J(F), J_{1,1}(F), J_{1,2}(F))$, hence $\mu_a(F)\ge 3$. Thus we can assume $F_a=(x, h(x,y))$, where $h(0,0)=0$.

Furthermore, $$\mu_a(F)=\dim_{\C}\ {\mathcal O}_a/(h_y,h_{yy})=1,$$ hence
$h_y(a)=h_{yy}(a)=0$ and  $\{h_y=0\}\pitchfork \{h_{yy}=0\}$ at $a$.  Consequently we have $h_{xy}(a)\not=0$ and $h_{yyy}\not=0$.

By the Weierstrass Preparation Theorem we have
$$h(x,y)=v(x,y)(y^3+a_1(x)y^2+a_2(x)y+a_3(x)),$$
where $v, a_i$ are holomorphic and $v(0,0)=C\not=0$, $a_i(0)=0$ for $i=1,2,3$.
By the Rouche Theorem we see that $d(F_a)=d(x,y^3+a_1(x)y^2+a_2(x)y+a_3(x))=3$
(here $d(F)$ denotes the topological degree of the mapping $F$).
Hence $F_a$ is a local analytic covering of degree $3$. By \cite{gun}, Theorem 12, p. 104, we have
\begin{equation}\label{eqn1}
y^3+b_1(x,h)y^2+b_2(x,h)y+b_3(x,h)=0.\tag{$\star$}
\end{equation}
Here $b_i$ are holomorphic and $b_i(0,0)=0$. The equality (\ref{eqn1}) can be rewritten as
\begin{equation}\label{eqn2}
(y+b_1(x,h)/3)^3+c(x,h)(y+b_1(x,h)/3)=d(x,h),\tag{$\star\star$}
\end{equation}
where $c(z,w)$, $d(z,w)$ are holomorphic and $c(0,0)=d(0,0)=0$.
Since $h(0,y)=Cy^3+\ldots$, we have from (\ref{eqn2}) that
$\frac{\partial{d}}{\partial w}(0,0)=C^{-1}\not=0$.
Hence $\frac{\partial c}{\partial z}(0,0)=C^{-1}\frac{\partial^2{h}}{\partial x\partial y}(0,0)\not=0$.
Furthermore, $\frac{\partial c}{\partial w}(0,0)=0$, because there is no term $y^2$ on the right hand side of (\ref{eqn2}).
Now we see that
$$x_1=c(x,h),\ y_1=y+b_1(x,h)/3$$
and
$$\overline{X}=c(x,y),\ \overline{Y}=d(x,y)$$
form systems of coordinates.

But $F_a^*(\overline{X},\overline{Y})=(c(x,h),d(x,h))=(x_1,y_1^3+x_1y_1)$.
\end{proof}

\begin{co}\label{cor_GCInd2}
Let $F\in \Omega_2(d_1,d_2)$. 
If $F$ has $m= c(d_1,d_2)=d_1^2+d_2^2+3d_1d_2-6d_1-6d_2+7$ generalized cusps, then all these generalized cusps are simple.
\end{co}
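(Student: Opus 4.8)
The plan is to prove that each of the $m:=c(d_1,d_2)$ generalized cusps of $F$ has index $1$, and then to quote Theorem \ref{cusp}. Denote the generalized cusps by $a_1,\dots,a_m\in\C^2$.

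\emph{Step 1: the bound $\sum_{i=1}^m\mu_{a_i}(F)\le c(d_1,d_2)$.} This is exactly the argument used in the proof of Corollary \ref{cor_GCInd}. I would fix $\varepsilon>0$ so small that the balls $U_{a_i}$ of radius $\varepsilon$ about the $a_i$ are pairwise disjoint and each is a sufficiently small ball in the sense of Theorem \ref{tw}, and then pick a generic mapping $\widetilde F\in\Omega_2(d_1,d_2)$ close to $F$ whose discriminant, by Theorem \ref{tw1}, has exactly $c(d_1,d_2)$ cusps. By Theorem \ref{tw} the mapping $\widetilde F$ has precisely $\mu_{a_i}(F)$ simple cusps in $U_{a_i}$, and since the $U_{a_i}$ are pairwise disjoint these cusps are all distinct; hence $\sum_{i=1}^m\mu_{a_i}(F)$ does not exceed the total number of cusps of $\widetilde F$, which is $c(d_1,d_2)$. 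Note that only the local hypothesis that each $a_i$ is a generalized cusp of $F$ is used here, so no global assumption on $F$ is needed.

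\emph{Step 2: the bound $\mu_{a_i}(F)\ge 1$ for each $i$.} By the Remark following Definition \ref{dfGenCusIn} one has $\mu_{a_i}(F)=\dim_{\C}\mathcal O_{a_i}/(J(F),J_{1,1}(F),J_{1,2}(F))$, a finite number (finiteness follows as in the proof of Theorem \ref{tw}, using that $F_{a_i}$ is proper and $J(F)=0$ is reduced near $a_i$). Since $a_i\in C(F)$ we have $J(F)(a_i)=0$, and one checks that $J_{1,1}(F)$ and $J_{1,2}(F)$ vanish at $a_i$ as well: this is precisely where the non-smoothness of the discriminant of $F_{a_i}$ enters, for if $F$ restricted to $C(F)$ were an immersion at $a_i$ the discriminant would be smooth at $F(a_i)$, whereas the failure of immersivity of $F|_{C(F)}$ at $a_i$ — or, in the degenerate case, the singularity of $C(F)$ itself at $a_i$ — forces $J_{1,1}(F)(a_i)=J_{1,2}(F)(a_i)=0$. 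Therefore the ideal $(J(F),J_{1,1}(F),J_{1,2}(F))$ is contained in the maximal ideal of $\mathcal O_{a_i}$, whence $\mu_{a_i}(F)\ge 1$. (Equivalently, one may read this off Theorem \ref{tw}: a generalized cusp must absorb at least one simple cusp of $\widetilde F$, since a map with only folds has a smooth discriminant and a node involves two distinct critical points.)

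Combining the two steps, $m=\sum_{i=1}^m\mu_{a_i}(F)\le c(d_1,d_2)=m$ together with $\mu_{a_i}(F)\ge 1$ forces $\mu_{a_i}(F)=1$ for every $i$, and Theorem \ref{cusp} then shows that each $a_i$ is a simple cusp. The assembly of the ingredients is routine; the one point that genuinely needs care is the lower bound $\mu_{a_i}(F)\ge 1$, i.e.\ excluding a generalized cusp of index zero, which relies on the geometric meaning of $J_{1,1}$ and $J_{1,2}$ as detectors of the tangency of the kernel field of $dF$ to the critical curve (equivalently, on the non-smoothness clause built into Definition \ref{dfGenCus}).
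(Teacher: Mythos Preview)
Your proof is correct and follows the same line as the paper's: bound $\sum_i\mu_{a_i}(F)\le c(d_1,d_2)$ via Theorems~\ref{tw} and~\ref{tw1} (the paper packages this as Corollary~\ref{cor_GCInd}), combine with $\mu_{a_i}(F)\ge 1$ to force each index to be $1$, and invoke Theorem~\ref{cusp}. You are more explicit than the paper in justifying $\mu_{a_i}(F)\ge 1$ and in observing that only the local hypotheses built into Definition~\ref{dfGenCus} are needed for the upper bound, but the argument is essentially identical.
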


\begin{proof}
Indeed, if $a_1,\ldots,a_m$ are such generalized cusps then $\sum^m_{i=1} \mu_{a_i}(F) \le m$ by Corollary \ref{cor_GCInd}. Hence $\mu_{a_i}(F) =1$
for every $i\in\{1,\ldots,m\}$ and the cusps are simple.
\end{proof}

\begin{defi}\label{df_GenNode}
Let $F=(f,g):\C^2\to\C^2$ be a proper polynomial mapping. Denote by $C(F)=\{x:\ J(F)(x)=0\}$ its critical set. By a \emph{generalized node} of $F$, denoted by $(F,a,b;c)$,  we mean a non-ordered pair $a,b\in C(F)$  such that $F(a)=F(b)=c$ and the point $(a,b)$ is an isolated zero of the mapping $G\ni (x,y)\in C(F)\times C(F)\mapsto F(x)-F(y) \in \C^2$. By the \emph{multiplicity of the node} $(F,a,b;c)$ we mean the number $$\nu(a,b)=dim_\C \ {\mathcal O}_{(a,b)}/({J(F)(x),J(F)(y), f(x)-f(y),g(x)-g(y))}.$$
\end{defi}

\begin{re} \label{nody} If $(F,a,b;c)$ is a generalized node, then $a,b\in C(F)$, $F(a)=F(b)$ and $F(C(F)_a)\not=F(C(F)_b)$. Conversely, if $a,b\in C(F)$, $F(a)=F(b)$,  $F(C(F)_a)\not=F(C(F)_b)$ and we additionally assume that germs $C(F)_a$ and $C(F)_b$ are irreducible, then $(a,b)$ is a generalized node.
\end{re}

Of course every mapping $F\in \Omega_2(d_2,d_2)$ has only finitely many generalized nodes and for every generalized node $(F,a,b;c)$ the number $\nu(a,b)$ is positive and finite.

\begin{theo}\label{tw11}
Let $F=(f,g)\in \Omega_2(d_1,d_2)$. Assume that $F$ has a
generalized node at $(a,b)\in \C^2\times \C^2$. If $U_a, U_b\subset \C^2$ are sufficiently
small balls around $a$ and $b$, respectively, then $\nu(a,b)$ is equal to the number of
simple nodes in $U_a\times U_b$ of a mapping $F'$ where $F'\in \Omega_2(d_1,d_2)$ is a generic mapping, which is sufficiently
close to $F$ in the natural topology of $\Omega_2(d_1,d_2)$. 
\end{theo}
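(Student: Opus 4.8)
The plan is to imitate the proof of Theorem \ref{tw}, replacing the cusp--detecting germ $\Phi=(J(F),J_{1,1}(F))$ on $\C^2$ by a node--detecting germ on $\C^2\times\C^2$.

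First I would record that $a\neq b$. Indeed, along the diagonal $\{(x,x):x\in C(F)\}$ the map $(x,y)\mapsto F(x)-F(y)$ vanishes identically, so $(a,a)$ can never be an isolated zero of it; since by Definition \ref{df_GenNode} a generalized node is an isolated zero of that map, we must have $a\neq b$, and we may therefore choose the balls $U_a,U_b$ to be disjoint. Next, consider the holomorphic map
$$\Theta=\big(J(F)(x),\,J(F)(y),\,f(x)-f(y),\,g(x)-g(y)\big)\colon \C^2\times\C^2\longrightarrow\C^4,$$
where $x$ and $y$ denote the coordinates on the two copies of $\C^2$. Its zero set in $\C^4$ is exactly $\{(x,y)\in C(F)\times C(F):F(x)=F(y)\}$, so $(a,b)$ is an isolated zero of $\Theta$, and by Definition \ref{df_GenNode} the number $\nu(a,b)$ is precisely the multiplicity of $\Theta$ at this zero, i.e. $\dim_\C\mathcal O_{(a,b)}/(J(F)(x),J(F)(y),f(x)-f(y),g(x)-g(y))$.

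Then I would take a sequence of generic mappings $F_i=(f_i,g_i)\in\Omega_2(d_1,d_2)$ with $F_i\to F$ and form $\Theta_i=(J(F_i)(x),J(F_i)(y),f_i(x)-f_i(y),g_i(x)-g_i(y))$. Since $J(F_i)\to J(F)$ and $f_i\to f$, $g_i\to g$ as polynomials, $\Theta_i\to\Theta$ uniformly on compact sets; shrinking $U_a,U_b$ so that $\Theta$ has no zero on $\partial(U_a\times U_b)$, the Rouch\'e theorem (\cite{cir}, p.~86) gives that for $i$ large $\Theta_i$ has exactly $\nu(a,b)$ zeroes in $U_a\times U_b$, counted with multiplicity, exactly as in the proof of Theorem \ref{tw}. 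In contrast with the cusp case, no correction term is needed here: every zero $(x,y)$ of $\Theta_i$ in $U_a\times U_b$ satisfies $x,y\in C(F_i)$, $F_i(x)=F_i(y)$ and $x\neq y$ (because $U_a\cap U_b=\emptyset$), hence it is a genuine multisingularity of $F_i$; and since $F_i$ is generic, by Theorem \ref{tw1} the discriminant of $F_i$ has only nodes and cusps, so no branch of the discriminant distinct from the cuspidal one passes through a cusp, and therefore such a pair $(x,y)$ must constitute a simple node of $F_i$. Finally, in the node normal form the restriction of $(x,y)\mapsto F_i(x)-F_i(y)$ to $C(F_i)\times C(F_i)$ becomes a local biholomorphism, so each of these $\nu(a,b)$ zeroes of $\Theta_i$ is simple (multiplicity one) and corresponds to exactly one simple node. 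This yields the claim.

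The step I expect to be the main obstacle is this last one: verifying that the genericity of $F_i$ really resolves the generalized node into $\nu(a,b)$ simple zeroes of $\Theta_i$, each a bona fide simple node --- in particular, excluding nearby zeroes of $\Theta_i$ that would pair a cusp point of $F_i$ with another critical point (ruled out by the structure of generic discriminants in Theorem \ref{tw1}) and checking that each simple node contributes multiplicity one to $\Theta_i$ (via the node normal form). Once these local facts are in place, the Rouch\'e and multiplicity bookkeeping is routine, just as in the proof of Theorem \ref{tw}.
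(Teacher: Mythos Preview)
Your proposal is correct and follows essentially the same route as the paper: the paper also introduces the map $\Phi=(J(F)(x),J(F)(y),f(x)-f(y),g(x)-g(y))$ on $\C^2\times\C^2$, passes to a generic sequence $F_i\to F$, applies the Rouch\'e theorem, and concludes by noting that for generic $F_i$ every zero of $\Phi_i$ in $U_a\times U_b$ is a simple node. Your write-up is in fact more careful than the paper's at the points you flagged as potential obstacles---the paper dispatches them with the single sentence ``the mappings $F_i$ are generic, in particular all zeroes of $\Phi_i$ are simple nodes''---and your preliminary remark that $a\neq b$ (hence $U_a,U_b$ may be taken disjoint) is a useful clarification the paper leaves implicit.
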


\begin{proof}
Let $F_i=(f_i, g_i)\in\Omega_2(d_1,d_2)$ be a sequence of
generic mappings, which is convergent to $F$. Consider the
mappings $\Phi=(J(F)(x), J(F)(y),f(x)-f(y),g(x)-g(y))$ and $\Phi_i(x,y)=(J(F_i)(x), J(F_i)(y),f_i(x)-f_i(y),g_i(x)-g_i(y))$. Obviously
$\Phi_i\to \Phi$.

Since $(a,b)$ is a generalized node of $F$ we have $\Phi(a,b)=0$ and 
$d_{(a,b)}(\Phi)<\infty$. We can choose neighborhoods $U_a$ and $U_b$ of $a$ and $b$, respectively, so that $\Phi$ has no other zeroes in $U_a\times U_b$. 
By the Rouche Theorem (see \cite{cir}, p. 86), we
have that for large $i$ the mapping $\Phi_i$ has exactly
$d_{(a,b)}(\Phi)$ zeroes in $U_a\times U_b$.  However, the mappings $F_i$
are generic, in particular all zeroes of $\Phi_i$ are
simple nodes. 
Hence $\nu(a,b)(F)=d_{(a,b)}(\Phi)$ is indeed the number of
simple nodes of $F_i$ in $U_a\times U_b$.
\end{proof}

\begin{co}\label{foldes}
Let $F\in \Omega_2(d_1,d_2)$ and let $(a_1,b_1),\ldots, (a_m,b_m)$ be generalized nodes of $F$.
Then $\sum^m_{i=1} \nu(a_i,b_i)\le n(d_1,d_2)$, in particular $F$ has at most $n(d_1,d_2)$ generalized nodes. 
\end{co}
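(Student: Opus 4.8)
The plan is to reduce the global bound to a ``count on a generic deformation'' statement and then invoke Theorem \ref{tw11} together with Theorem \ref{tw1}. Concretely, let $(a_1,b_1),\ldots,(a_m,b_m)$ be the generalized nodes of $F$. Since each $(a_i,b_i)$ is an isolated zero of the map $(x,y)\mapsto F(x)-F(y)$ on $C(F)\times C(F)$, and since $a_i,b_i\in C(F)$ with $F(C(F)_{a_i})\neq F(C(F)_{b_i})$ (Remark \ref{nody}), the points $a_i,b_i$ (for $i=1,\ldots,m$) are pairwise distinct except possibly for coincidences forced by the non-ordered convention; in any case we may choose pairwise disjoint balls $U_{a_i},U_{b_i}$ in $\C^2$ so that the only zero of the auxiliary map $\Phi(x,y)=(J(F)(x),J(F)(y),f(x)-f(y),g(x)-g(y))$ lying in $U_{a_i}\times U_{b_i}$ is $(a_i,b_i)$ itself, and so that all the product neighborhoods $U_{a_i}\times U_{b_i}$ are pairwise disjoint inside $\C^2\times\C^2$.

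Next I would pick a sequence $F_j\in\Omega_2(d_1,d_2)$ of generic mappings (in the sense of Theorem \ref{tw1}) converging to $F$. By Theorem \ref{tw11}, for $j$ large enough the mapping $F_j$ has exactly $\nu(a_i,b_i)$ simple nodes inside each $U_{a_i}\times U_{b_i}$. Since these product neighborhoods are pairwise disjoint, the simple nodes produced in distinct ones are distinct (and, being simple nodes of $F_j$, each is counted once as an unordered pair), so $F_j$ has at least $\sum_{i=1}^m \nu(a_i,b_i)$ simple nodes overall. On the other hand, by Theorem \ref{tw1} a generic $F_j$ has exactly $n(d_1,d_2)$ nodes. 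Hence $\sum_{i=1}^m \nu(a_i,b_i)\le n(d_1,d_2)$. Since each $\nu(a_i,b_i)\ge 1$ (as remarked after Definition \ref{df_GenNode}), this also gives $m\le n(d_1,d_2)$.

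The one point requiring a little care — and the main potential obstacle — is the bookkeeping for the unordered-pair convention: one must make sure that a single simple node of $F_j$ is not being double-counted, and that a generalized node $(a_i,b_i)$ with $a_i$ near $a_k$ and $b_i$ near $b_k$ for $i\neq k$ cannot occur, so that the chosen product neighborhoods really can be taken disjoint. This follows because the underlying geometric datum of a generalized node is the unordered pair $\{F(C(F)_{a}),F(C(F)_{b})\}$ of (distinct) branches of the discriminant meeting at $c=F(a)=F(b)$; distinct generalized nodes give distinct such data, hence distinct or disjointly separable pairs of source points, and the product neighborhoods can be shrunk accordingly. Once this separation is in place, the additivity of the node count over disjoint regions is immediate and the inequality follows. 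I expect the rest of the argument to be entirely routine, essentially a repackaging of Theorems \ref{tw11} and \ref{tw1}.
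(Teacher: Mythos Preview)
Your proposal is correct and is exactly the argument the paper has in mind: the corollary is stated without proof, immediately after Theorem~\ref{tw11}, because it follows from Theorems~\ref{tw11} and~\ref{tw1} precisely by the deformation-and-count argument you outline. The only remark is that your claim that ``the points $a_i,b_i$ are pairwise distinct'' is slightly off (distinct generalized nodes can share a source point, e.g.\ $a_1=a_2$ with $b_1\neq b_2$), but what you actually need and correctly use is that the \emph{product} points $(a_i,b_i)\in\C^2\times\C^2$ are pairwise distinct and distinct from all swaps $(b_j,a_j)$, which follows since $a_i\neq b_i$ (otherwise $(a_i,b_i)$ would lie on the diagonal, which is a curve in $\Phi^{-1}(0)$) and since distinct generalized nodes are distinct unordered pairs; this suffices to choose the product neighborhoods pairwise disjoint and off the diagonal, and the rest is as you wrote.
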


\begin{co}\label{disc}
Let $F\in \Omega_2(d_1,d_2)$ be a proper mapping. Assume that Jacobian of $F$ does not have self-intersections.
Then the discriminant of $F$ has at most $n(d_1,d_2)$ self-intersections.
\end{co}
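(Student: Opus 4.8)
The plan is to construct an injection from the set of self\-/intersection points of $\Delta(F)$ into the set of generalized nodes of $F$, and then to invoke Corollary \ref{foldes}, which bounds the number of generalized nodes by $n(d_1,d_2)$. By a \emph{self\-/intersection} of $\Delta(F)$ I mean a point at which the reduced curve $\Delta(F)$ has at least two distinct local branches; if $J(F)$ happens to be non\-/reduced I would simply replace $C(F)$ by its reduction from the outset, which affects nothing below.

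First I would record the local picture. Since $F$ is proper it contracts no curve (a contracted curve would lie in a compact fibre) and is finite onto $\C^2$; hence $F|_{C(F)}\colon C(F)\to\Delta(F)$ is a finite morphism of curves, so for each $c\in\Delta(F)$ the fibre $C(F)\cap F^{-1}(c)$ is finite, and over a small ball $U_c\ni c$ its preimage in $C(F)$ is a disjoint union of small neighbourhoods $W_1,\dots,W_k$ of the points $a_1,\dots,a_k$ of that fibre. Because $F$ contracts no curve, the image of any branch of $C(F)$ is again a parametrized, hence unibranch, curve germ; consequently $F$ carries each branch of $C(F)$ onto a single branch of $\Delta(F)$, and every branch of $\Delta(F)$ at $c$ arises in this way from a branch of $C(F)$ at some $a_j$.

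Next, given a self\-/intersection $c$ of $\Delta(F)$, with two distinct local branches $\beta_1\neq\beta_2$, I would trace each $\beta_m$ back to a branch $\gamma_m$ of $C(F)$ at a point $a_m\in C(F)\cap F^{-1}(c)$. Since $F(\gamma_1)=\beta_1\neq\beta_2=F(\gamma_2)$ we get $\gamma_1\neq\gamma_2$; and if $a_1=a_2$ then $C(F)$ would have two branches there, a self\-/intersection of the Jacobian, excluded by hypothesis. So $a_1\neq a_2$. Using that $C(F)$ is unibranch at $a_1$ and $a_2$ (so that each $W_m$ equals $\gamma_m$, whence $F(W_m)=\beta_m$) and that two distinct branches $\beta_1,\beta_2$ meet only at $c$, one checks that any $(x,y)\in (W_1\times W_2)\cap(C(F)\times C(F))$ with $F(x)=F(y)$ forces this common value into $\beta_1\cap\beta_2=\{c\}$, hence $x=a_1$ and $y=a_2$ by the choice of the $W_m$. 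Thus $(a_1,a_2)$ is an isolated zero of $(x,y)\mapsto F(x)-F(y)$ on $C(F)\times C(F)$, i.e. $(F,a_1,a_2;c)$ is a generalized node. Distinct self\-/intersection points produce generalized nodes with distinct third entry, so the assignment $c\mapsto (F,a_1,a_2;c)$ is injective, and Corollary \ref{foldes} completes the argument.

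I expect the only genuine work to be in the third paragraph: converting ``the image curve has two branches at $c$'' into ``the source curve has two distinct points over $c$ whose branches map onto them'', which really uses the hypothesis that $C(F)$ has no self\-/intersections, together with the verification that the resulting pair is an isolated solution. The remaining ingredients — properness forcing finiteness of $F|_{C(F)}$ and non\-/contraction of curves, and the fact that the image of a unibranch germ is unibranch — are routine.
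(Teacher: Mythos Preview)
Your proposal is correct and follows essentially the same approach as the paper: the paper's one-line proof cites Remark~\ref{nody} (which is exactly the converse statement that $a\neq b$ in $C(F)$ with $F(a)=F(b)$, $F(C(F)_a)\neq F(C(F)_b)$, and irreducible germs at $a,b$ yield a generalized node) together with Corollary~\ref{foldes}. You have simply expanded the content of Remark~\ref{nody} inline---the lifting of two distinct branches of $\Delta(F)$ to two distinct points of $C(F)$ via unibranch germs, and the direct verification of isolation---rather than citing it.
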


\begin{proof}
It follows from Remark \ref{nody} and Corollary \ref{foldes}.
\end{proof}

The mapping $F=(x^2,y^2)$ is an example that shows that we can not omit the assumption about self-intersections of the Jacobian of $F$. 

\begin{co}\label{disc1}
Let $F\in \Omega_2(d_1,d_2)$ be a proper mapping with a reduced Jacobian without self-intersections. 
Then the discriminant of $F$ has at most $c(d_1,d_2)$ simple (i.e. irreducible) singularities and at most $n(d_1,d_2)$ self-intersections.
\end{co}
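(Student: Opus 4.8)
The statement is a bookkeeping consequence of the results already established, so the plan is to treat its two halves separately. The bound on the number of self-intersections of the discriminant is exactly Corollary \ref{disc} (applicable since $F$ is proper and its Jacobian has no self-intersections), so nothing new is needed there. For the other half I would bound the number of irreducible (unibranch) singular points of $\Delta(F)$ by the number of generalized cusps of $F$, and then invoke Corollary \ref{cor_GCInd}.

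Here is how the counting would go. First, since $F$ is proper with a reduced Jacobian, Corollary \ref{cor_GCInd} says that the only mono-singularities of $F$ are folds and generalized cusps, and that the generalized cusps $a_1,\dots,a_m$ satisfy $\sum_{i=1}^m\mu_{a_i}(F)\le c(d_1,d_2)$. I would then observe that every generalized cusp has index at least $1$: a fold is the only mono-singularity of index zero (a fold is stable, so by Theorem \ref{tw} no simple cusp is created near it; equivalently at such a point $J_{1,1}(F)$ or $J_{1,2}(F)$ is non-zero), and since the Jacobian is reduced there are no other possibilities; hence $m\le c(d_1,d_2)$. Next, let $c$ be a point at which $\Delta(F)$ has an irreducible singularity. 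As $F$ is proper and generically finite, $F^{-1}(c)\cap C(F)=\{p_1,\dots,p_k\}$ is finite and the germ $(\Delta(F),c)$ is the union of the germs $F(C(F)_{p_j})$. Because the Jacobian has no self-intersections, each branch $C(F)_{p_j}$ is irreducible, so its $F$-image is an irreducible curve germ, and at a fold point this image is moreover smooth. Were all the $p_j$ folds, $(\Delta(F),c)$ would be a union of smooth branches, which cannot be a unibranch singular germ; so at least one $p_j$ is a generalized cusp. Assigning such a $p_j$ to each irreducible singularity $c$ gives a map into $\{a_1,\dots,a_m\}$, injective because $p_j$ determines $c=F(p_j)$. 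Thus the number of irreducible singularities of $\Delta(F)$ is at most $m\le c(d_1,d_2)$, and combined with Corollary \ref{disc} this is the assertion.

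I do not expect a genuine obstacle: all the hard analytic content sits in Corollaries \ref{cor_GCInd}, \ref{foldes} and \ref{disc}. The only points needing a line of justification are that ``the Jacobian has no self-intersection'' forces each local branch of $C(F)$ to be unibranch (so that its $F$-image is a single irreducible branch of the discriminant), and that a reduced Jacobian forces every generalized cusp to have positive index. If anything, the one thing to phrase carefully is exactly which degenerate singular points of $\Delta(F)$ are being called ``simple'' and which ``self-intersections'' — a point that is simultaneously the image of a cusp and meets another branch is neither a clean cusp nor a clean node — but this ambiguity does not affect either of the two upper bounds being claimed.
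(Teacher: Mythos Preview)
Your proposal is correct and follows essentially the same route as the paper: the paper's proof is the one-liner ``It follows from Corollary \ref{cor_GCInd} and Corollary \ref{disc},'' and you simply unpack the bridge from Corollary \ref{cor_GCInd} (which bounds $\sum\mu_{a_i}$) to a bound on the number of unibranch singular points of $\Delta(F)$. Your added justification --- that under the no-self-intersection hypothesis each local branch of $C(F)$ is irreducible, that a fold image is smooth, and hence each unibranch singularity of $\Delta(F)$ is hit by at least one generalized cusp with $\mu\ge 1$ --- is exactly what the paper leaves implicit.
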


\begin{proof}
It follows from Corollary \ref{cor_GCInd} and Corollary \ref{disc}.
\end{proof}

We also have a result analogous to Theorem \ref{cusp}:

\begin{theo}
Let $F=(f,g)\in \Omega_2(d_1,d_2)$. Assume that $F$ has generalized node $(F,a,b;c)$. Then $(F,a,b;c)$ is a node if and only if $\nu (a,b)(F)=1$.
\end{theo}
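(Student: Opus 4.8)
The plan is to prove the two implications separately, and in both to exploit that $\nu(a,b)$ is unchanged if we replace $F$ by $L\circ F$ with $L\in GL(2)$ (this multiplies $J(F)$ by a nonzero constant and replaces $f,g$ by independent linear combinations) and if we compose $F$ on the source with biholomorphisms near $a$ and near $b$; I would record this invariance at the outset, since every normalization below relies on it. For the implication ``node $\Rightarrow\nu(a,b)=1$'' I would argue conceptually: if $a$ is a fold point then $C(F)$ is smooth at $a$, $J(F)$ is a reduced equation of $C(F)$ near $a$, $F|_{C(F)}$ is an immersion, and the tangent line at $c=F(a)$ of the smooth branch $F(C(F)_a)$ of the discriminant equals $\im d_aF$; the same holds at $b$, and transversality of the two folds means $\im d_aF\neq\im d_bF$. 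Hence the map $G\colon C(F)_a\times C(F)_b\to\C^2$, $(x,y)\mapsto F(x)-F(y)$, has bijective differential at $(a,b)$, so it is a local biholomorphism there. Since $(J(F)(x))$ and $(J(F)(y))$ are then the prime ideals of the smooth hypersurfaces $C(F)_a\times\C^2$ and $\C^2\times C(F)_b$ in $\mathcal O_{(a,b)}$, we get $\mathcal O_{(a,b)}/(J(F)(x),J(F)(y))\cong\mathcal O_{C(F)_a\times C(F)_b,(a,b)}$, and the images of $f(x)-f(y),g(x)-g(y)$ there are the two components of the local biholomorphism $G$, whence the full quotient is $\C$, i.e.\ $\nu(a,b)=1$.

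For the converse I would pass to the defining ideal. Put $a=b=0$ in the two source planes and $c=0$ in the target, and let $I=(J(F)(x),J(F)(y),f(x)-f(y),g(x)-g(y))\subset\mathcal O_{(a,b)}$, the local ring of $\C^2\times\C^2$ at $(a,b)$, regular of dimension $4$ with maximal ideal $\mathfrak m$. All four generators vanish at $(a,b)$, so $I\subseteq\mathfrak m$, and $\dim_\C\mathcal O_{(a,b)}/I=1$ is, by Nakayama, equivalent to the linear parts of the four generators forming a basis of $\mathfrak m/\mathfrak m^2\cong\C^4$. The linear part of $J(F)(x)$ is $d_aJ(F)$ (a covector in the $x$-variables only), that of $J(F)(y)$ is $d_bJ(F)$ (in the $y$-variables only), and the linear parts of $f(x)-f(y),g(x)-g(y)$ are the two components of $d_aF\circ\pi_1-d_bF\circ\pi_2$. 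Reading off independence I would extract, in turn: (i) $d_aJ(F)\neq0$ and $d_bJ(F)\neq0$, so $C(F)$ is smooth at $a$ and at $b$; (ii) $\rank d_aF=1$ (it is at most $1$ since $J(F)(a)=0$, and if it were $0$ then $d_bJ(F)$ and the two components of $-d_bF\circ\pi_2$ would be three covectors in the $2$-dimensional space of forms in the $y$-variables, forcing dependence), and likewise $\rank d_bF=1$; (iii) $L_a:=\im d_aF$ and $L_b:=\im d_bF$ are distinct lines, since if $L_a=L_b$, then choosing target coordinates making this common line the first axis kills the second component of $d_aF\circ\pi_1-d_bF\circ\pi_2$, a contradiction.

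With $L_a\neq L_b$ I would now normalize: choose target coordinates $(u,v)$ with $L_a=\{v=0\}$ and $L_b=\{u=0\}$, and source coordinates near $a$ so that $F=(x_1,h(x_1,x_2))$ with $\nabla h(0)=0$, and near $b$ so that $F=(k(y_1,y_2),y_2)$ with $\nabla k(0)=0$ (possible exactly because $\rank d_aF=\rank d_bF=1$ with these images). Then $J(F)(x)=h_{x_2}$ and $J(F)(y)=k_{y_1}$, and recomputing the four linear parts shows that they form a basis of $\mathfrak m/\mathfrak m^2$ if and only if $h_{x_2x_2}(0)\neq0$ and $k_{y_1y_1}(0)\neq0$. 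By the Morse lemma in the fibre variable (with the base variable as parameter, absorbing a function of the base into a left coordinate change) these two conditions say precisely that $F$ is a fold at $a$ and at $b$; together with $L_a\neq L_b$ (transversality of the two folds) this means $(F,a,b;c)$ is a node.

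I expect the main obstacle to be the converse, concretely step (iii) together with the final normalization: one has to check that colength $1$ genuinely forces the two tangent lines $L_a,L_b$ to be \emph{distinct} (not merely both present), and then that the residual conditions squeezed out of the linear-algebra are exactly the second-derivative nondegeneracies $h_{x_2x_2}(0)\neq0$, $k_{y_1y_1}(0)\neq0$ characterizing folds. The forward direction is comparatively routine once the invariance of $\nu$ and the fold normal form are in hand.
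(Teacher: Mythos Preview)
Your proof is correct, and both directions share the paper's underlying idea: $\nu(a,b)=1$ is equivalent to the map $H=(J(F)(x),J(F)(y),f(x)-f(y),g(x)-g(y))$ being a local biholomorphism at $(a,b)$, i.e.\ to the four differentials being linearly independent. For the forward implication your argument is essentially the same as the paper's ``direct computation'', just phrased more conceptually.

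For the converse your route diverges from the paper's in execution. The paper argues geometrically: since $H$ is a biholomorphism, the smooth curves $J_a\times\{b\}$ and $\{a\}\times J_b$ (which have distinct tangents in $\C^4$) map to smooth curves $\{0,0\}\times F(J_a)$ and $\{0,0\}\times F(J_b)$ with distinct tangents, whence each $F(J_{a}),F(J_b)$ is a smooth transversal branch of the discriminant; it then invokes the cited result \cite{adv} (smooth discriminant germ $+$ reduced Jacobian $\Rightarrow$ fold) to conclude. You instead read everything off the $4\times4$ matrix of linear parts: smoothness of $C(F)$ at $a,b$, $\rank d_aF=\rank d_bF=1$, $L_a\neq L_b$, and after normalizing $F=(x_1,h)$ near $a$ and $F=(k,y_2)$ near $b$, the residual nondegeneracies $h_{x_2x_2}(0)\neq0$, $k_{y_1y_1}(0)\neq0$; the parametrized Morse lemma then yields the fold normal forms directly. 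This is a more elementary and self-contained path (no appeal to \cite{adv}), at the cost of a small explicit matrix computation; the paper's version is shorter and more geometric but leans on an external classification result.
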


\begin{proof}
If $(F,a,b;c)$ is a node then by direct computation we see that $\nu (a,b)(F)=1$.

Now assume that  $(F,a,b;c)$ is a generalized node and $\nu (a,b)(F)=1$. Hence the mapping $H:=(J(F)(x),J(F)(y), f(x)-f(y),g(x)-g(y))$ is a local biholomorphism near $(a,b)$. In particular the germs $J_a:=\{J(F)=0\}_a$, $J_b:=\{J(F)=0\}_b$ are smooth. We can assume that $c=(0,0)$. Since the curves $J_a\times \{b\}\subset \C^2\times \{b\}$ and $\{a\}\times J_b\subset \{a\}\times \C^2$ have distinct tangents, so do their images via $H$, namely $\{0,0\}\times F(J_a)$ and $\{0,0\}\times F(J_b)$. Thus can assume that $F(J_a)=\{ x=0\}$ and $F(J_b)=\{ y=0\}$. Since the discriminant of the germ $F_a$ is smooth and it has reduced Jacobian we see that this germ is a fold. Similarly, $F_b$ is a fold. Arguing as in \cite{adv} we can assume that $F_a$ is equivalent to $(x_1^2,y_1)$ and $F_b$ is equivalent to $(x_2,y_1^2)$.
\end{proof}

\begin{co}\label{disc2}
Let $F\in \Omega_2(d_1,d_2)$. If $F$ has $m= n(d_1,d_2)$ generalized nodes, then all these generalized nodes are simple nodes and $F$ has no other discrete multi-singularities.
\end{co}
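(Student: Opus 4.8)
The plan is to repeat, for nodes, the argument that produced Corollary~\ref{cor_GCInd2} for cusps: feed the multiplicity bound of Corollary~\ref{foldes} into the characterization proved in the theorem immediately above (a generalized node $(F,a,b;c)$ is a node if and only if $\nu(a,b)(F)=1$). So the first move is purely combinatorial.

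Concretely, write $(F,a_1,b_1;c_1),\dots,(F,a_m,b_m;c_m)$ for the $m=n(d_1,d_2)$ generalized nodes of $F$. As recalled right after Definition~\ref{df_GenNode}, each of them has finite positive multiplicity, so $\nu(a_i,b_i)\ge 1$ for every $i$; on the other hand Corollary~\ref{foldes} gives $\sum_{i=1}^{m}\nu(a_i,b_i)\le n(d_1,d_2)=m$. Comparing the two inequalities forces $\nu(a_i,b_i)=1$ for all $i$, and the theorem above then identifies each $(F,a_i,b_i;c_i)$ with a simple node.

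It remains to see that $F$ has no discrete multi-singularity besides these $m$ nodes. If it had one, say at a point $c$, it would be carried by at least two local branches of the discriminant $\Delta$ through $c$, hence by at least two points $a,b\in C(F)$ with $F(a)=F(b)=c$ lying on distinct, and so differently mapped, branches of $C(F)$; by Remark~\ref{nody} the pair $(a,b)$ is then a generalized node $(F,a,b;c)$. Since the $m$ generalized nodes listed above already attain equality in Corollary~\ref{foldes}, $F$ has no further generalized node; hence every discrete multi-singularity of $F$ is accounted for by one of the $(F,a_i,b_i;c_i)$, each of which is an ordinary node. This gives the statement.

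I do not expect a genuine obstacle; the only point that needs a little care is the last paragraph, where one must check that every discrete multi-singularity really does produce a generalized node in the precise sense of Definition~\ref{df_GenNode} (which is exactly the content of Remark~\ref{nody}) and that, the node–multiplicity budget of Corollary~\ref{foldes} being exhausted, no configuration more complicated than an ordinary node can occur. There is no nontrivial computation involved.
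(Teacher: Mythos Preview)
Your argument is correct and matches the paper's proof for the first assertion: the paper simply invokes Corollary~\ref{foldes} to get $\sum_i \nu(a_i,b_i)\le m$, concludes that each $\nu(a_i,b_i)=1$, and stops. Your additional paragraph handling the ``no other discrete multi-singularities'' clause is more than the paper provides (the paper's proof omits this entirely); the only minor imprecision is that Remark~\ref{nody} requires the germs $C(F)_a$, $C(F)_b$ to be irreducible, which you do not check---but the conclusion you want follows directly from Definition~\ref{df_GenNode}, since a \emph{discrete} multi-singularity at $c$ with witnesses $a,b$ is exactly an isolated zero of the map $(x,y)\mapsto F(x)-F(y)$ on $C(F)\times C(F)$, hence a generalized node by definition.
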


\begin{proof}
Indeed, if $(a_1,b_1)\ldots,(a_m,b_m)$ are such generalized nodes then $\sum^m_{i=1} \nu (a_i,b_i)(F) \le m$, hence $\nu (a_i,b_i)(F)=1$ for every $i\in\{1,\ldots,m\}$.
\end{proof}

\section{Main Result}
The aim of this section is to prove Theorem \ref{Thm:main}. We begin with two lemmas which allow us to exploit the assumption that a mapping has the maximal possible number of cusps and nodes.

\begin{lem}\label{lem_inkluzja_bezout}
Let $f_1,f_2,f_3,f_4$ be homogeneous polynomials in $\C[x_0,x_1,x_2]$ of degrees, respectively, $d_1,d_2,d_3,d_4$ with $d_1=d_2\leq d_3<d_4$. Assume that the ideal $(f_3,f_4)$ is contained in $(f_1,f_2)$. Let $N$ be the number of discrete points of $V(f_3,f_4)\subset\mathbb{P}^2$ that are not contained in $V(f_1,f_2)$. We have $N\leq d_3d_4-d_1d_2$. If additionally $V(f_3,f_4)$ contains a curve or there is a point contained in $V(f_3,f_4)$ with higher multiplicity than in $V(f_1,f_2)$ then the inequality is strict.
\end{lem}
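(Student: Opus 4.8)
\noindent\emph{Proof sketch.}
The idea is a Bézout count on $\mathbb P^2$, using the ideal inclusion only to compare local intersection numbers at the points of $V(f_1,f_2)$. For a point $p$ put $i(p;a,b)=\dim_{\mathbb C}\mathcal O_{\mathbb P^2,p}/(a,b)$; all lengths appearing below are finite because the relevant schemes are $0$-dimensional.

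\emph{Reduction to $\gcd(f_1,f_2)=1$.} We induct on $d_1$, the case $d_1=0$ being immediate. If $h'=\gcd(f_1,f_2)$ has degree $e'\ge1$, then $(f_3,f_4)\subseteq(f_1,f_2)=h'\cdot(f_1/h',f_2/h')$ forces $h'\mid f_3$ and $h'\mid f_4$; dividing, the statement for $f_1,f_2,f_3,f_4$ follows from that for $f_1/h',f_2/h',f_3/h',f_4/h'$ (degrees $d_1-e',d_2-e',d_3-e',d_4-e'$), since the number of discrete points of $V(f_3,f_4)$ outside $V(f_1,f_2)$ is at most the corresponding number for the reduced tuple. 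The inductive bound and the identity $(d_3-e')(d_4-e')-(d_1-e')(d_2-e')=d_3d_4-d_1d_2-e'(d_3+d_4-2d_1)$ give $N<d_3d_4-d_1d_2$ (as $d_3+d_4-2d_1\ge1$); this matches the statement, since $V(h')\subseteq V(f_1,f_2)\subseteq V(f_3,f_4)$, so $V(f_3,f_4)$ contains a curve. Henceforth $Z:=V(f_1,f_2)$ is a $0$-dimensional complete intersection, hence of length $d_1d_2$ by Bézout and with all local rings $\mathcal O_{Z,p}$ Artinian Gorenstein; moreover $Z\subseteq V(f_3,f_4)$ and $i(p;f_3,f_4)\ge i(p;f_1,f_2)$ for $p\in Z$.

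\emph{Case $V(f_3,f_4)$ zero-dimensional.} Bézout for $f_3,f_4$ gives $\sum_p i(p;f_3,f_4)=d_3d_4$; splitting over $Z$ and its complement,
\[
d_3d_4\ \ge\ \sum_{p\in Z}i(p;f_1,f_2)+\#\big(V(f_3,f_4)\setminus Z\big)\ =\ d_1d_2+N .
\]
Equality forces $i(p;f_3,f_4)=i(p;f_1,f_2)$ at each $p\in Z$ and every point of $V(f_3,f_4)$ off $Z$ to be reduced, so the inequality is strict as soon as some point has strictly larger multiplicity in $V(f_3,f_4)$ than in $V(f_1,f_2)$.

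\emph{Case $h:=\gcd(f_3,f_4)$ of degree $e\ge1$.} Write $f_3=h\bar g_3$, $f_4=h\bar g_4$ with $\gcd(\bar g_3,\bar g_4)=1$; if some $\bar g_i$ is constant then $V(f_3,f_4)=V(h)$ has no discrete points and $N=0$. Otherwise $W:=V(\bar g_3,\bar g_4)$ is $0$-dimensional of length $(d_3-e)(d_4-e)$, and the discrete points of $V(f_3,f_4)=V(h)\cup W$ are exactly the points of $W$ not on the curve $V(h)$. Off $V(h)$ the factor $h$ is a local unit, so $(\bar g_3,\bar g_4)_p=(f_3,f_4)_p\subseteq(f_1,f_2)_p$ and $i(p;\bar g_3,\bar g_4)\ge i(p;f_1,f_2)$; using $Z\subseteq V(h)\cup W$ and restricting Bézout for $\bar g_3,\bar g_4$ to $W\setminus V(h)$ gives
\[
N\ \le\ (d_3-e)(d_4-e)-d_1d_2+(S-T),\qquad S=\sum_{p\in Z\cap V(h)}i(p;f_1,f_2),\quad T=\sum_{p\in W\cap V(h)}i(p;\bar g_3,\bar g_4).
\]
The crux is $S-T\le e\,d_1$. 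Split $Z\cap V(h)$ into its points in $W$ and those not in $W$. Off $W$, one $\bar g_i$ is a local unit, so $h\in(f_1,f_2)_p$ and $i(p;f_1,f_2)=\dim\mathcal O_p/(h,f_1,f_2)$. On $W$, from $h(\bar g_3,\bar g_4)_p\subseteq(f_1,f_2)_p$ we get $(\bar g_3,\bar g_4)_p\subseteq(f_1,f_2)_p:h$, and Gorenstein duality for the Artinian complete intersection $\mathcal O_p/(f_1,f_2)$ gives $\dim\mathcal O_p/\big((f_1,f_2):h\big)=i(p;f_1,f_2)-\dim\mathcal O_p/(h,f_1,f_2)$, whence $i(p;f_1,f_2)\le i(p;\bar g_3,\bar g_4)+\dim\mathcal O_p/(h,f_1,f_2)$. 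Adding the two parts, $S\le\operatorname{length}V(h,f_1,f_2)+T$. Finally $\operatorname{length}V(h,f_1,f_2)\le ed_1$: replacing $f_1,f_2$ by two generic $\mathbb C$-linear combinations one may assume $\gcd(h,f_1)=\gcd(h,f_2)=1$, and inclusion–exclusion of lengths gives $\operatorname{length}V(h,f_1,f_2)=2ed_1-\operatorname{length}\big(V(h,f_1)\cup V(h,f_2)\big)\le 2ed_1-ed_1$, since that union contains $V(h,f_1)$. Therefore $S-T\le ed_1$, and (recall $e\le d_3$ and $d_4>d_1$, so $d_3+d_4-e-d_1\ge1$)
\[
N\ \le\ (d_3-e)(d_4-e)-d_1d_2+ed_1\ =\ d_3d_4-d_1d_2-e\,(d_3+d_4-e-d_1)\ \le\ d_3d_4-d_1d_2-1,
\]
the strict inequality asserted whenever $V(f_3,f_4)$ contains a curve. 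The one genuinely delicate point is the estimate $S-T\le ed_1$ — bounding how much of the complete intersection $V(f_1,f_2)$ can be forced onto the common divisor of $f_3,f_4$ — which is precisely where the Gorenstein structure of $\mathcal O_p/(f_1,f_2)$ and the small Bézout computation for $V(h,f_1,f_2)$ are used.
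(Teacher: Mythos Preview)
Your proof is correct and follows essentially the same strategy as the paper's: reduce to $\gcd(f_1,f_2)=1$ by dividing out the common factor, handle the zero-dimensional case by a straight Bézout count, and in the case $h=\gcd(f_3,f_4)$ has positive degree use the relation between $(f_1,f_2)$, $(f_1,f_2):h$, and $(f_1,f_2,h)$ together with $(\bar g_3,\bar g_4)\subseteq (f_1,f_2):h$.

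Two minor remarks on presentation. First, what you call ``Gorenstein duality'' is just rank--nullity for the multiplication-by-$h$ map on the Artinian ring $\mathcal O_p/(f_1,f_2)$: kernel and cokernel have equal dimension for any linear endomorphism of a finite-dimensional vector space, so no Gorenstein hypothesis is actually used. Second, your inclusion--exclusion bound $\operatorname{length}V(h,f_1,f_2)\le e d_1$ is a slightly roundabout version of what the paper does: it simply observes $\mult_p V(h,f_1,f_2)\le \mult_p V(h,f_1)$ (from $(h,f_1)\subseteq (h,f_1,f_2)$) and sums to $ed_1$ by Bézout, after arranging $\gcd(h,f_1)=1$. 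This needs only one generic linear replacement rather than two. The resulting numerical bound $N\le (d_3-e)(d_4-e)-d_1d_2+ed_1$ is identical to the paper's, and the strictness follows as you wrote from $d_3+d_4-e-d_1\ge 1$.
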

\begin{proof}
First assume that $\gcd\{f_3,f_4\}=1$ (and consequently $\gcd\{f_1,f_2\}=1$). Then $V(f_3,f_4)$ and $V(f_1,f_2)$ are finite sets containing
by Bezout Theorem, respectively, $d_3d_4$ and $d_1d_2$ points, counted with multiplicities. The multiplicities in $V(f_3,f_4)$ of all points contained in $V(f_1,f_2)$ are greater than or equal to their multiplicities in $V(f_1,f_2)$. 
Hence
\begin{equation*}
\begin{aligned}
d_3d_4&=\sum_{p\in V(f_1,f_2)}\mult_p(f_3,f_4)+\sum_{p\notin V(f_1,f_2)}\mult_p(f_3,f_4)\\
&\geq \sum_{p\in V(f_1,f_2)}\mult_p(f_1,f_2)+N= d_1d_2+N.
\end{aligned}
\end{equation*}

Thus $N\leq d_3d_4-d_1d_2$ and if at some point $p\in V(f_1,f_2)$ the inequality between multiplicities is strict then $N< d_3d_4-d_1d_2$.

Now assume that $\gcd\{f_3,f_4\}=g$, where $d=\deg g$, $0<d<d_3$ and $\gcd\{f_1,f_2\}=1$. Note that for generic $a\in\C$ we have $\gcd\{f_1+af_2,g\}=1$, moreover, we may replace $f_1$ with $f_1+af_2$ without changing $N$. Obviously $(f_3,f_4):g=\left(\frac{f_3}{g},\frac{f_4}{g}\right)$ and for $p\notin V(g)$ and an ideal $I$ we have $\mult_p V(I)=\mult_p V(I:g)$.

Note that for a point $p\in V(f_1,f_2,g)$ we have the following exact sequence:
$$0\rightarrow\mathcal{O}_p/(f_1,f_2):g\xrightarrow{\cdot g}\mathcal{O}_p/(f_1,f_2)\rightarrow\mathcal{O}_p/(f_1,f_2,g)\rightarrow 0.$$
It follows that
\begin{equation*}
\begin{aligned}
\mult_p V(f_1,f_2)&=\mult_p V(f_1,f_2,g)+\mult_p V((f_1,f_2):g)\\
&\leq \mult_p V(f_1,g)+\mult_p V((f_3,f_4):g)\\
&=\mult_p V(f_1,g)+\mult_p V\left(\frac{f_3}{g},\frac{f_4}{g}\right).\end{aligned}
\end{equation*}
Summing over all $V(g)$ we obtain 
$$\sum_{p\in V(g)}\mult_p V(f_1,f_2)\leqslant d_1d+\sum_{p\in V(g)}\mult_p V\left(\frac{f_3}{g},\frac{f_4}{g}\right).$$

Moreover, by Bezout Theorem we have
$$\sum_{p\notin V(g)}\mult_pV(f_1,f_2)= d_1 d_2 -\sum_{p\in V(g)}\mult_pV(f_1,f_2).$$

Now we have
\begin{equation*}
\begin{aligned}
N&\leqslant (d_3-d)(d_4-d)-\sum_{p\notin V(g)}\mult_pV(f_1,f_2)-\sum_{p\in V(g)}\mult_p\left(\frac{f_3}{g},\frac{f_4}{g}\right)\\
&=(d_3-d)(d_4-d)-d_1d_2+\sum_{p\in V(g)}\mult_pV(f_1,f_2)-\sum_{p\in V(g)}\mult_p\left(\frac{f_3}{g},\frac{f_4}{g}\right)\\
&\leqslant (d_3-d)(d_4-d)-d_1d_2+d_1d<d_3d_4-d_1d_2-d(d_4-d_1)<d_3d_4-d_1d_2.
\end{aligned}
\end{equation*}

Finally, if $\gcd\{f_1,f_2\}=g$ and $d=\deg g>0$ then we take $g_i=f_i/g$ for $i=1,\ldots,4$. The discrete points of $V(f_3,f_4)$ that are not contained in $V(f_1,f_2)$ are also discrete points of $V(g_3,g_4)$ not contained in $V(g_1,g_2)$. Hence $N\leq(d_3-d)(d_4-d)-(d_1-d)(d_2-d)=d_3d_4-d_1d_2-d(d_3+d_4-d_1-d_2)<d_3d_4-d_1d_2$.

\end{proof}

For a mapping $F\in \Omega_2(d_1,d_2)$ let $c(F)$ denote the number of cusps of $F$ and let $n(F)$ denote the number of nodes. Of course 
$c(F)\le c(d_1,d_2)$ and $n(F)\le n(d_1,d_2)$.

\begin{lem}\label{lem_max_implikacje}
Let $F=(f_1,f_2)\in \Omega_2(d_1,d_2)$. Assume that $F$ has the maximal possible number of cusps and nodes, i.e., $c(F)=c(d_1,d_2)$ and $n(F)=n(d_1,d_2)$. Then:
\begin{enumerate}
\item $C(F)$ is smooth and has degree $d_1+d_2-2$,
\item $V(f_1)$ and $V(f_2)$ do not have a common point at infinity,
\item $\mu(F)=d_1d_2$ and $F$ is proper,
\item $C(F)$ is smooth at infinity and irreducible.
\item the mapping $F:C(F)\to\Delta(F)$ is birational,
\item all mono-singularities of $F$ are folds or cusps, all multi-singularities of $F$ are nodes,
\item if $\gcd(d_1,d_2)\neq d_2$ then $V(f_1)$ and $V(J(F))$ do not have a common point at infinity.
\end{enumerate}
\end{lem}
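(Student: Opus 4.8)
The plan is to compare $F$ with a generic mapping $F'\in\Omega_2(d_1,d_2)$ lying close to it: by Theorem~\ref{tw1}, $F'$ has exactly $c(d_1,d_2)$ cusps and $n(d_1,d_2)$ nodes, all simple and all contained in $\C^2$, so the extremality hypothesis on $F$ says that, upon specialization to $F$, no cusp and no node may degenerate --- neither by escaping to infinity, nor by colliding, nor by becoming non-discrete. The counting device is Lemma~\ref{lem_inkluzja_bezout}, applied through the ideal inclusions
$$\bigl(J(F),J_{1,1}(F)\bigr)\subseteq\bigl((f_1)_x,(f_1)_y\bigr),\qquad \bigl(J(F),J_{1,2}(F)\bigr)\subseteq\bigl((f_2)_x,(f_2)_y\bigr),$$
valid because $J(F)=(f_1)_x(f_2)_y-(f_1)_y(f_2)_x$ and, by the (easily checked) identity $J_{1,1}(F)=-\{f_1,J(F)\}$ (resp.\ $J_{1,2}(F)=-\{f_2,J(F)\}$), both $J(F)$ and $J_{1,1}(F)$ are polynomial combinations of $(f_1)_x,(f_1)_y$ (resp.\ $J(F),J_{1,2}(F)$ of $(f_2)_x,(f_2)_y$); and through the analogous inclusion for the node system $\bigl(J(F)(x),J(F)(y),f(x)-f(y),g(x)-g(y)\bigr)$ on $\C^2\times\C^2$.

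I would first settle the behaviour at infinity and reducedness. Since $c(d_1,d_2)$ is monotone in each of $d_1,d_2$ (and the cusp number is $0$ as soon as $d_1d_2\le2$), $c(F)=c(d_1,d_2)$ already forces $\deg f_1=d_1$ and $\deg f_2=d_2$. Next, after replacing $F$ by $T\circ F$ for a generic $T\in GL(2)$ acting on the target --- which changes neither the cusps, the nodes, nor the curve $C(F)$, and serves only to ensure, as in Definition~\ref{dfGenCusIn}, that $\nabla f_1$ is non-vanishing on every branch of $C(F)$ --- every cusp of $F$ is an isolated point of $V(J(F),J_{1,1}(F))$ lying off $V(\nabla f_1)$, hence contributes to the number $N$ of Lemma~\ref{lem_inkluzja_bezout} for the homogenizations of the first inclusion, whose Bezout numbers are $(d_1+d_2-2)(2d_1+d_2-4)$ and $(d_1-1)^2$. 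Thus $c(F)\le(d_1+d_2-2)(2d_1+d_2-4)-(d_1-1)^2=c(d_1,d_2)$ (for $(d_1,d_2)=(2,2)$ one falls back on \cite{fj}), and $c(F)=c(d_1,d_2)$ forces each inequality in the proof of Lemma~\ref{lem_inkluzja_bezout} to be an equality. By its strictness clause, $V(J(F),J_{1,1}(F))$ has no positive-dimensional component and no point of excess multiplicity: in particular $J(F)$ is reduced (a multiple factor $h$ would divide both $J(F)$ and $J_{1,1}(F)=-\{f_1,J(F)\}$, producing a curve component), the homogenization of $J(F)$ is not divisible by $x_0$, i.e.\ $\deg J(F)=d_1+d_2-2=:R$ and $\deg C(F)=R$, and $V(\widehat{J(F)},\widehat{J_{1,1}(F)})$ carries no point on the line at infinity --- which translates into $\overline{C(F)}$ being smooth there and meeting the line at infinity transversally.

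The node system is treated by the same Bezout--inclusion scheme, now on $\C^2\times\C^2$ and with the node count $n(d_1,d_2)$ of Theorem~\ref{tw1}: $n(F)=n(d_1,d_2)$ forces $\bigl(J(F)(x),J(F)(y),f(x)-f(y),g(x)-g(y)\bigr)$ to have only isolated zeros, all simple, all affine, all off the diagonal. Isolatedness says $F|_{C(F)}$ is generically injective, hence birational onto $\Delta(F)$: this is (5). Since $F|_{C(F)}$ is generically injective, $F$ must be proper: if it were not, a branch of $\overline{C(F)}$ at infinity and a branch of $C(F)$ in $\C^2$ would map onto a common curve, contradicting generic injectivity. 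Hence $\bar f_1,\bar f_2$ have no common zero at infinity, which is (2), and a generic fibre of $F$ then avoids the line at infinity, so $\mu(F)=\deg f_1\cdot\deg f_2=d_1d_2$ and $F$ is proper, which is (3). By Corollary~\ref{disc2} we now get exactly $n(d_1,d_2)$ simple nodes and no other discrete multi-singularity, while (5) excludes the non-discrete ones --- the multi-singularity part of (6). As for (7): the $\gcd(d_1,d_2)$-dependent term of $n(d_1,d_2)$ records precisely the intersection pattern of $\overline{C(F)}$ with $\overline{\Delta(F)}$ along the line at infinity realized by the generic mapping; a common point of $V(f_1)$ and $V(J(F))$ at infinity would alter this pattern, and when $\gcd(d_1,d_2)\neq d_2$ the pattern has no slack for it, so extremality of the node count forbids it.

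Finally the cusp side. Now that $F$ is proper with reduced Jacobian, Corollary~\ref{cor_GCInd} applies: the mono-singularities of $F$ are folds or generalized cusps, and $\sum_i\mu_{a_i}\le c(d_1,d_2)=c(F)$ over the generalized cusps $a_i$. Since a simple cusp contributes $1$ and, by Theorem~\ref{cusp}, a non-simple generalized cusp contributes $\ge2$, equality forces all generalized cusps to be simple cusps, $c(d_1,d_2)$ in number by Corollary~\ref{cor_GCInd2}; and by the remark following Definition~\ref{dfGenCus} every singular point of the curve $C(F)$ would then be a (simple, hence smooth) generalized cusp --- impossible, so $C(F)$ is smooth in $\C^2$. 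Together with $\deg C(F)=R$ this is (1), with the smoothness at infinity obtained above it is (4), and with Corollary~\ref{cor_GCInd} it is the mono-singularity part of (6). The main obstacle throughout is the at-infinity bookkeeping: converting ``a degeneration on the line at infinity'' into a genuine drop of $c(F)$ or $n(F)$ means matching the excess intersection there --- as governed by Lemma~\ref{lem_inkluzja_bezout} and its node analogue --- against the explicit polynomials $c(d_1,d_2)$ and $n(d_1,d_2)$, and in particular understanding exactly why the generic configuration leaves the slack measured by $\gcd(d_1,d_2)$; this is what makes conclusion (7) the most delicate point.
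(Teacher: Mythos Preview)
Your overall strategy --- use the cusp-side Bezout inclusion to control $C(F)$ and the node-side to control $F|_{C(F)}$ --- is close to the paper's, but the logical order you propose breaks at a crucial point: you try to deduce properness of $F$ from birationality of $F|_{C(F)}$, and this implication is simply false. Take $F=(x^2+y,\,xy)\in\Omega_2(2,2)$. Here $J(F)=2x^2-y$, so $C(F)$ is the smooth parabola $y=2x^2$, and $F|_{C(F)}\colon(x,2x^2)\mapsto(3x^2,2x^3)$ is injective, hence birational onto $\Delta(F)$; moreover $n(F)=0=n(2,2)$, so your node hypothesis is satisfied. Yet the leading forms $x^2$ and $xy$ share the zero $(0{:}0{:}1)$ at infinity, so $F$ is not proper (indeed $\mu(F)=3\neq 4$). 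Your sentence ``if it were not, a branch of $\overline{C(F)}$ at infinity and a branch of $C(F)$ in $\C^2$ would map onto a common curve'' does not follow: non-properness concerns the full fibre of $F$, not its restriction to $C(F)$. Since Corollary~\ref{cor_GCInd} and the definition of generalized node both require $F$ to be proper, your deductions of (2), (3) and the mono-singularity half of (6) collapse with this step.

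The paper avoids this by reversing the order: it proves (2) \emph{first}, by a direct local computation showing that a common zero of $\overline{f_1},\overline{f_2}$ at infinity forces the multiplicity of $V(\overline{J(F)},\overline{J_{1,1}(F)})$ at that point to be strictly larger than that of $V(\overline{(f_1)_x},\overline{(f_1)_y})$ (via the exact sequence $0\to\mathcal{O}/(f,g){:}x_0\to\mathcal{O}/(f,g)\to\mathcal{O}/(f,g,x_0)\to 0$ and an order count along $x_0=0$), which by the strictness clause of Lemma~\ref{lem_inkluzja_bezout} drops $c(F)$ below $c(d_1,d_2)$. Then (3) is immediate from (2), and (4) is a second local computation of the same kind. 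Your shortcut ``equality in Lemma~\ref{lem_inkluzja_bezout} forces $V(\widehat{J(F)},\widehat{J_{1,1}(F)})$ to carry no point at infinity'' is also too quick: a singular point of $\overline{C(F)}$ at infinity may lie in $V(\overline{(f_1)_x},\overline{(f_1)_y})$ (the paper shows it \emph{must}, via Euler's identity together with (2)), and then Lemma~\ref{lem_inkluzja_bezout} says nothing unless one exhibits excess multiplicity there --- which is precisely the content of the paper's computation for (4). Finally, your ``node analogue'' of Lemma~\ref{lem_inkluzja_bezout} on $\C^2\times\C^2$ is never formulated (what is the smaller ideal?); the paper's node count in (7) is not an inclusion argument at all but an explicit intersection that peels off the diagonal, the cusp contribution on it, and the $(\gcd(d_1,d_2)-1)(d_1+d_2-2)$ points at infinity one by one, and (7) is obtained by locating the extra points at infinity that appear when $V(f_1)$ and $V(J(F))$ meet there.
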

\begin{proof}
Let $\Sigma^{1,1}(F)$ denote the set of cusps of $F$. Recall that $\Sigma^{1,1}(F)$ is a finite subset of $V(J(F),J_{1,1}(F),J_{1,2}(F))$, moreover 
$\Sigma^{1,1}(F)\cap V(f_{1,x_1},f_{1,x_2},f_{2,x_1},f_{2,x_21})=\emptyset$. Note that by replacing $F$ with $(f_1+af_2,f_2)$, where $a\in\C$, we do not change the set of cusps, $V(J(F))$, the degree of $F$ (recall that we assume $d_1\geq d_2$), nor any of the assertions. However, we do change $f_{1,x_1}$, $f_{1,x_2}$ and $J_{1,1}(F)$ to, respectively, $f_{1,x_1}+af_{2,x_1}$, $f_{1,x_2}+af_{2,x_2}$ and $J_{1,1}(F)+aJ_{1,2}(F)$. Thus for a generic $a\in\C$ we can ensure, by replacing $F$ with $(f_1+af_2,f_2)$, that $\Sigma^{1,1}(F)\cap V(f_{1,x_1},f_{1,x_2})=\emptyset$ and that all curves in $V(J(F),J_{1,1}(F))$ are also contained in $V(J(F),J_{1,1}(F),J_{1,2}(F))$. 
Let $\overline{f_{1,x_1}}$ be the homogenization of $f_{1,x_1}$ of degree $d_1-1$, i.e., $\displaystyle \overline{f_{1,x_1}}(x_0,x_1,x_2)=x_0^{d_1-1}f_{1,x_1}\left(\frac{x_1}{x_0},\frac{x_2}{x_0}\right)$ regardless of the actual degree of $f_{1,x_1}$. Similarly, we define $\overline{f_{1,x_2}}, \overline{J(F)}, \overline{J_{1,1}(F)}, \overline{f_1}, \overline{f_2}$ as homogenizations of degrees $d_1-1$, $d_1+d_2-2$, $2d_1+d_2-3$, $d_1$ and $d_2$, respectively.

Note that the ideal $(\overline{J(F)}, \overline{J_{1,1}(F)})$ is contained in the ideal $(\overline{f_{1,x_1}},\overline{f_{1,x_2}})$, hence we may apply Lemma \ref{lem_inkluzja_bezout}. Moreover, by Theorem \ref{tw1} the maximal possible number of cusps is equal to the upper bound from Lemma \ref{lem_inkluzja_bezout}.

To prove (1) note that if $C(F)$ is non-reduced along a curve $C$ then $C$ is also a component of $V(J_{1,1}(F))$. Moreover, if $C(F)$ does not have the maximal degree then neither has $J_{1,1}(F)$, so the line at infinity is a component of $V(\overline{J(F)},\overline{J_{1,1}(F)})$. In both cases by Lemma \ref{lem_inkluzja_bezout} $F$ does not have the maximal number of cusps. Moreover, if $C(F)$ has a singular point $p$, then $F$ has a generalized cusp at $p$ which is not a cusp, which together with the assumption $c(F)=c(d_1,d_2)$ contradicts Corollary \ref{cor_GCInd}. Hence $C(F)$ is smooth.

To prove (2) assume that $V(f_1)$ and $V(f_2)$ do have a common point at infinity. After a linear change of coordinates in the domain we may assume that it is $(0:0:1)$. By Lemma \ref{lem_inkluzja_bezout} $\overline{J(F)}$ and $\overline{J_{1,1}(F)}$ do not have a common factor, thus the multiplicity of $V(\overline{J(F)}, \overline{J_{1,1}(F)})$ at $(0:0:1)$ is finite. We will show that the multiplicity of $V(\overline{J(F)}, \overline{J_{1,1}(F)})$ at $(0:0:1)$ is strictly larger than the multiplicity of $V(\overline{f_{1,x_1}},\overline{f_{1,x_2}})$ at $(0:0:1)$, so by Lemma \ref{lem_inkluzja_bezout} $F$ does not have the maximal number of cusps.

We will denote by $\widetilde{f}$ the dehomogenization of $\overline{f}$ with respect to $x_2$. By (1) $\deg f_1=d_1$, thus we may define the integer $k_1$ as the minimal $k$ such that the term $x_1^kx_2^{d_1-k}$ has a nonzero coefficient in $f_1$. By assumption $\widetilde{f_1}(0,0)=0$, so $\ord_{x_1} \widetilde{f_1}(0,x_1)=k_1>0$. Similarly, $\ord_{x_1} \widetilde{f_2}(0,x_1)=k_2>0$. Moreover, for $i=1,2$ we have $\ord_{x_1} \widetilde{f_{i,x_1}}(0,x_1)=k_i-1$ and $\ord_{x_1} \widetilde{f_{i,x_2}}(0,x_1)\in\{k_i, \infty\}$, where $\infty$ is achieved for $k_i=d_i$. It follows that $\ord_{x_1} \widetilde{J(F)}(0,x_1)\geq k_1+k_2-1\geq k_1$ and $\ord_{x_1} \widetilde{J_{1,1}(F)}(0,x_1)\geq 2k_1+k_2-2\geq k_1$.

Now, for $f$, $g$ equal, respectively, $\widetilde{f_{1,x_1}}$, $\widetilde{f_{1,x_2}}$ or $\widetilde{J(F)}$, $\widetilde{J_{1,1}(F)}$ we may use the exact sequence
$$0\rightarrow\mathcal{O}_{(0,0)}/(f,g):x_0\xrightarrow{\cdot x_0}\mathcal{O}_{(0,0)}/(f,g)\rightarrow\mathcal{O}_{(0,0)}/(f,g,x_0)\rightarrow 0$$
to obtain 
$$\mult_{(0,0)}V(f,g)=\mult_{(0,0)}V(f,g,x_0)+\mult_{(0,0)}V((f,g):(x_0)).$$
Furthermore, we have $\mult_{(0,0)}V(f,g,x_0)=\min\{\ord_{x_1}f(0,x_1),\ord_{x_1}g(0,x_1)\}$. Thus,
$$\mult_{(0,0)}V(\widetilde{f_{1,x_1}},\widetilde{f_{1,x_2}})=k_1-1+\mult_{(0,0)}V((\widetilde{f_{1,x_1}},\widetilde{f_{1,x_2}}):(x_0))$$
and 
$$\mult_{(0,0)}V(\widetilde{J(F)},\widetilde{J_{1,1}(F)})\geq k_1+\mult_{(0,0)}V((\widetilde{J(F)},\widetilde{J_{1,1}(F)}):(x_0)).$$
Since taking quotient of ideals preserves inclusion we have
$$\mult_{(0,0)}V((\widetilde{f_{1,x_1}},\widetilde{f_{1,x_2}}):(x_0))\leq \mult_{(0,0)}V((\widetilde{J(F)},\widetilde{J_{1,1}(F)}):(x_0)),$$
so we obtain the desired strict inequality of multiplicities.

To prove (3) observe that by (2) and Bezout theorem $F^{-1}(a,b)=V(f_1-a,f_2-b)$ has $d_1d_2$ points counted with multiplicities.

To prove (4) suppose that $p=(0:p_1:p_2)$ is a singular point of $V(\overline{J(F)})$. Observe that $p\in V(\overline{J(F)},\overline{J_{1,1}(F)})$. If we had $p\not\in V(\overline{f_{1,x_1}},\overline{f_{1,x_2}})$ then $p$ would be one of the at most $c(d_1,d_2)$ points that are potential cusps. Since $p$ lies at infinity it is not a cusp, so the number of cusps would not be maximal. Thus $\overline{f_{1,x_1}}(p)=\overline{f_{1,x_2}}(p)=0$. Furthermore, $$d_1\overline{f_1}(p)=0\cdot\overline{f_1}_{x_0}(p)+p_1\cdot\overline{f_1}_{x_1}(p)+p_2\cdot\overline{f_1}_{x_2}(p)=0.$$
Since $\overline{f_1}(p)=0$ we have by (2) that $\overline{f_2}(p)\neq 0$. It follows that $\overline{f_{2,x_1}}(p)\neq 0$ or $\overline{f_{2,x_2}}(p)\neq 0$. Without loss of generality we may assume that $\overline{f_{2,x_2}}(p)\neq 0$. Let $\widetilde{f}$ denote the germ at $p$ of a dehomogenization of $\overline{f}$. Now we have:
$$\mult_pV(\overline{J(F)},\overline{J_{1,1}(F)})=$$
$$\mult_pV\left(\widetilde{f_{1,x_1}}- \frac{\widetilde{f_{1,x_2}}\cdot\widetilde{f_{2,x_1}}}{\widetilde{f_{2,x_2}}},
\widetilde{f_{1,x_2}}\cdot\widetilde{J_{x_1}(F)}-\widetilde{f_{1,x_1}}\cdot \widetilde{J_{x_2}(F)}\right)=
$$
$$\mult_pV\left(\widetilde{f_{1,x_1}}- \frac{\widetilde{f_{1,x_2}}\cdot\widetilde{f_{2,x_1}}}{\widetilde{f_{2,x_2}}},
\widetilde{f_{1,x_2}}\left(\widetilde{J_{x_1}(F)}- \frac{\widetilde{J_{x_2}(F)}\cdot\widetilde{f_{2,x_1}}}{\widetilde{f_{2,x_2}}}\right)\right)=
$$
$$=\mult_pV(\widetilde{f_{1,x_1}} \cdot\widetilde{f_{2,x_2}}-\widetilde{f_{1,x_2}}\cdot\widetilde{f_{2,x_1}},\widetilde{f_{1,x_2}})+
$$ $$+
\mult_pV(\widetilde{f_{1,x_1}}\cdot\widetilde{f_{2,x_2}}- \widetilde{f_{1,x_2}}\cdot\widetilde{f_{2,x_1}},
\widetilde{J_{x_1}(F)}\cdot\widetilde{f_{2,x_2}}-
\widetilde{J_{x_2}(F)}\cdot\widetilde{f_{2,x_1}})\geq
$$
$$\geq \mult_pV(\overline{f_{1,x_1}},\overline{f_{1,x_2}})+1.$$
Hence by Lemma \ref{lem_inkluzja_bezout} $F$ does not have the maximal number of cusps.

Observe that the closure of $C(F)$ is a smooth curve in the projective plane, thus it must be irreducible.

To prove (5) observe that by (4) the curve $C(F)$ is irreducible, thus the mapping $F:C(F)\to\Delta(F)$ is either birational or generically $k:1$ for some $k>1$. Note that in the latter case the mapping $F$ does not have any nodes. This is a contradiction unless $(d_1,d_2)$ is either $(2,2)$ or $(3,1)$, i.e., unless $\deg C(F)=2$ and the maximal number of nodes is $0$. 
In the first case our assertion follows from \cite{fj}.
In the second case note that if $F:C(F)\to\Delta(F)$ is not birational, then
$\mu(F)\ge 4$ (in the fiber are two points of multiplicity $\ge 2$).
In the case $(3,1)$ we have $\mu(F)=3$, thus $F|_{C(F)}$ must be
birational.

To prove (6) note by (1) and (3) $F$ is proper and has a reduced Jacobian. Hence, we may apply Corollary \ref{cor_GCInd} and obtain that $F$ has at most $c(d_1,d_2)$ mono-singularities that are not folds. Thus if $c(F)=c(d_1,d_2)$ then all mono-singularities of $F$ must be folds or cusps.

Furthermore, by (5) all multi-singularities of $F$ are generalized nodes, hence by Corollary \ref{foldes} they are nodes.

To prove (7) we will count the nodes of $F$. Consider the set $Z=\{(x,y)\in\C^2\times\C^2\ :\ J(F)(x)=J(F)(y)=0,\ F(x)=F(y),\ x\neq y\}$. By (5) the set $Z$ is finite. Note that if $a\in\C^2$ is a node of $\Delta(F)$ then there are precisely two pairs $(x,y)\in Z$ such that $F(x)=a$. Thus the number of nodes of $F$ is bounded by $\frac{1}{2}|Z|$. To compute $|Z|$ we intersect $V(f_1(x)-f_1(y))$ with $V(f_2(x)-f_2(y))$ and obtain a surface, which decomposes into the diagonal $D=\{(x,y)\in\C^2\times\C^2\ :\ x=y\}$ and a surface $Z_1$ of degree $d_1d_2-1$. Then we intersect $Z_1$ with $V(J(F)(x))$ and obtain a curve, which decomposes into $D\cap V(J(F)(x))$ and a curve $Z_2$ of degree at most $(d_1d_2-2)(d_1+d_2-2)$. Finally, we intersect $Z_2$ with $V(J(F)(y))$ and obtain the set $Z$ and a finite set $Z_3$ contained in the diagonal. The set $Z_3$ contains the cusps of $F$ on the diagonal: $D\cap V(J(F)(x),J_{1,1}(F)(x),J_{1,2}(F)(x))$, which are contained in the intersection with multiplicity at least $2$. Moreover, the intersection $\overline{Z_2}\cap\overline{V(J(F)(y))}$ may contain some points at infinity. Indeed, if $\gcd(d_1,d_2)>1$ then it contains the points $(0:x_1:x_2:\varepsilon x_1:\varepsilon x_2)$ such that $\overline{J(F)}(0:x_1:x_2)=0$, $\varepsilon\neq 1$ and $\varepsilon^{\gcd(d_1,d_2)}=1$. Thus we obtain the bound
$$2n(F)\leq|Z|\leq (d_1d_2-2)(d_1+d_2-2)^2-2c(F)-(\gcd(d_1,d_2)-1)(d_1+d_2-2).$$

Note, that the bound coincides with the number of nodes of general mappings provided in Theorem \ref{tw1}. Suppose that $\gcd(d_1,d_2)\neq d_2$ and $V(f_1)$ and $V(J(F))$ do have a common point at infinity, we will show that $n(F)$ is not maximal. Indeed, let $(0:t_1:t_2)$ be the common point at infinity. Then for $\varepsilon^{\gcd(d_1,d_2)}\neq 1$ and $\varepsilon^{d_2}=1$ the points $(0:t_1:t_2:\varepsilon t_1:\varepsilon t_2)$ are contained in $\overline{Z_2}\cap\overline{V(J(F)(y))}$ and have not been subtracted when we were computing the bound for $n(F)$.

\end{proof}

\begin{lem}
The set $U=\{F\in \Omega_2(d_1,d_2): J(F) \text{ is reduced and of maximal degree}\}$ is open in $ \Omega_2(d_1,d_2)$.
\end{lem}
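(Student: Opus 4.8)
The plan is to realize $U$ as the complement of two closed conditions, each of which is Zariski closed in $\Omega_2(d_1,d_2)\cong\C^N$, and then conclude by observing that a finite union of Zariski closed sets is Zariski closed, so its complement is (Zariski, hence Euclidean) open. First I would handle the condition ``$J(F)$ has maximal degree''. Recall from the discussion after Theorem \ref{tw1} that the maximal value of $\deg J(F)$ is $R=d_1+d_2-2$, attained exactly when the top-degree homogeneous part of $J(F)$ does not vanish identically. The coefficients of $J(F)=f_xg_y-f_yg_x$ are bilinear (hence polynomial) functions of the coefficients of $F=(f,g)$; in particular the coefficients of the degree-$R$ part of $J(F)$ are polynomials in the coefficients of $F$, and the locus $Z_1$ where all of them vanish is Zariski closed. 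So $\{F:\deg J(F)=R\}=\Omega_2(d_1,d_2)\setminus Z_1$ is open.

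Next I would handle ``$J(F)$ is reduced''. The standard fact is that for a family of polynomials of bounded degree, the locus of non-reduced members is Zariski closed: a polynomial $h\in\C[x,y]$ of degree $\le R$ is non-reduced if and only if $h$ and all of its first partials $h_x,h_y$ have a common factor of positive degree, equivalently the subresultant/gcd of $(h,h_x,h_y)$ is nonconstant. One clean way to phrase it: $h$ is reduced iff $\gcd(h,h_x,h_y)$ is a unit; and having a nonconstant common factor is a closed condition on coefficients because it can be detected by the vanishing of appropriate resultants (or, projectively, by the Jacobian ideal of the homogenization failing to cut out a finite set — i.e. $V(\overline{h},\overline{h}_{x_0},\overline{h}_{x_1},\overline{h}_{x_2})$ containing a curve, which is a determinantal/resultant condition on coefficients). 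Applying this with $h=J(F)$, whose coefficients depend polynomially on those of $F$, shows that $Z_2=\{F: J(F)\text{ is non-reduced}\}$ is Zariski closed.

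Then $U=\Omega_2(d_1,d_2)\setminus(Z_1\cup Z_2)$ is the complement of a Zariski closed set, hence Zariski open, hence Euclidean open, which is the assertion. The only mildly delicate point — the ``main obstacle'' — is being careful that the two conditions interact correctly: if $\deg J(F)$ drops, one must still make sense of ``reduced'', but since we are taking a union of closed sets and then complementing, $U$ is precisely the set of $F$ for which $J(F)$ simultaneously attains degree $R$ \emph{and} is reduced, and both are manifestly non-vanishing/open conditions. I would also remark that ``non-reduced'' being closed is most transparently argued via Bezout as in Lemma \ref{lem_inkluzja_bezout}: the homogenized Jacobian ideal $(\overline{J(F)}_{x_0},\overline{J(F)}_{x_1},\overline{J(F)}_{x_2})$ either cuts out finitely many points (at most $(R-1)^2$ of them, by Bezout) or a positive-dimensional set, and the latter is detected by the vanishing of the appropriate maximal minors of a Sylvester-type matrix whose entries are polynomials in the coefficients — a Zariski closed condition. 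This is a routine semicontinuity argument and I would not grind through the explicit resultants.
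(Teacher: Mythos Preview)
Your approach is different from the paper's and, modulo one imprecision, works. The paper argues pointwise: given $F_{p_0}\in U$, it chooses a line $L=\{l=0\}$ meeting $V(J(F_{p_0}))$ in exactly $R=d_1+d_2-2$ distinct points (possible since $J(F_{p_0})$ is reduced of degree $R$) and considers the family map $\Psi(p,(x,y))=(p,\,J(F_p)(x,y),\,l(x,y))$, a dominant polynomial map of generic degree $R$. The fiber over $(p_0,(0,0))$ has cardinality $R$; since the locus where a generically finite map attains its maximal fiber cardinality is open, this persists on a neighborhood $V$ of $p_0$. But $\#\bigl(L\cap V(J(F_p))\bigr)=R$ forces $\deg J(F_p)=R$ and $J(F_p)$ squarefree, so $V\subset U$. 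Your argument instead writes $U^c$ as a union of explicit Zariski closed loci. The paper's route is more elementary (no resultant machinery) and geometric; yours makes the Zariski openness explicit from the outset.

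The imprecision: your claim that $Z_2=\{F:J(F)\text{ is non-reduced}\}$ is Zariski closed---equivalently, that in a family of polynomials of \emph{bounded} degree the non-reduced members form a closed set---is false as stated. For $R\ge 3$ take $h_t=(tx+1)^2y\in\C[x,y]_{\le R}$: it is non-squarefree for $t\neq 0$ but specializes to the squarefree $h_0=y$. What \emph{is} closed is the locus where the degree-$R$ homogenization $\overline{h}$ is non-squarefree, which is precisely what your Jacobian-ideal test (positive-dimensionality of $V(\overline{h}_{x_0},\overline{h}_{x_1},\overline{h}_{x_2})$) detects; however that locus equals $\{h:\deg h\le R-2\}\cup\{h:h\text{ non-squarefree}\}$, not the second set by itself. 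This does no damage to your conclusion: pulling back along $F\mapsto J(F)$ gives a closed set $Z'$ with $Z_2\subset Z'\subset Z_1\cup Z_2$, and since $Z_1$ is closed, $U^c=Z_1\cup Z_2=Z_1\cup Z'$ is closed. Equivalently (and more simply), you only need $Z_2$ to be relatively closed in the open set $\Omega_2(d_1,d_2)\setminus Z_1$, where $\deg J(F)\equiv R$ and the homogenization argument is unambiguous.
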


\begin{proof}
Take $F=F_{p_0}\in U$. Hence there exists a line $L=l(x,y)$ such that $\#L\cap J(F)=d_1+d_2-2$. Consider the mapping $\Psi: \Omega_2(d_1,d_2)\times \C^2\ni (p, (x,y)) \mapsto (p, J(F_p)(x,y), l(x,y))\in \Omega_2(d_1,d_2)\times \C^2$. It is a polynomial dominant mapping of geometrical degree $\deg J(F)=d_1+d_2-2$. Moreover, by the assumption the fiber over $(p_0, (0,0))$ has the maximal possible cardinality $d_1+d_2-2$. But this means that $\Psi$ has such cardinality of fibers in some open neighborhood $V\times D\subset \Omega_2(d_1,d_2)\times \C^2$ of the point $(p_0,(0,0))$. In particular the point $p_0$ belongs to $U$ with the open neighborhood $V$.
\end{proof}

\begin{lem}\label{lem_U1}
The set $U_1=\{F\in  U : c(F)=c(d_1,d_2) \text{ and } n(F)=n(d_1,d_2)\}$ is open in $\Omega_2(d_1,d_2)$. Moreover every $F\in U_1$ is locally stable.
\end{lem}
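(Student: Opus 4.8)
The plan is to obtain openness by a persistence-and-counting argument and then to read off local stability directly from Lemma~\ref{lem_max_implikacje}. Fix $F_0=(f_1,f_2)\in U_1$. By Lemma~\ref{lem_max_implikacje}(1),(3) the map $F_0$ is proper and $\deg f_i=d_i$, so the degree-$d_1$ and degree-$d_2$ homogeneous parts of $f_1$ and $f_2$ are coprime binary forms; their resultant is therefore a polynomial function on $\Omega_2(d_1,d_2)$ that does not vanish at $F_0$, hence does not vanish, and every corresponding map is proper, on a whole neighbourhood $W$ of $F_0$. Shrinking $W$ by the preceding lemma we may also assume that $J(F)$ is reduced of maximal degree for every $F\in W$.

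First I would use stability of the elementary singularities. By Corollary~\ref{cor_GCInd2} the $c(d_1,d_2)$ generalized cusps of $F_0$ are all simple cusps, and by Corollary~\ref{disc2} the $n(d_1,d_2)$ generalized nodes of $F_0$ are all simple nodes. Since the cusp germ $(x,y)\mapsto(x,y^3+xy)$ and the multi-germ of a node are stable, there are fixed pairwise disjoint small (bi-)balls around the cusp points and node pairs of $F_0$ such that every $F$ near $F_0$ has exactly one cusp in each cusp ball and exactly one node in each node bi-ball; hence $c(F)\ge c(d_1,d_2)$ and $n(F)\ge n(d_1,d_2)$ on a neighbourhood $W'\subseteq W$ of $F_0$. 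On the other hand every $F\in W'$ is proper with reduced Jacobian, so every cusp of $F$ is a generalized cusp and every node of $F$ is a generalized node; Corollary~\ref{cor_GCInd} then gives $c(F)\le c(d_1,d_2)$ and Corollary~\ref{foldes} gives $n(F)\le n(d_1,d_2)$. Therefore $c(F)=c(d_1,d_2)$ and $n(F)=n(d_1,d_2)$ on $W'$, i.e. $W'\subseteq U_1$, so $U_1$ contains a Euclidean neighbourhood of each of its points. Since the conditions cutting out $U_1$ --- reducedness and maximality of the degree of $J(F)$, and the prescribed values of $c(F)$ and $n(F)$, which are computed from dimensions of local rings whose defining equations have coefficients polynomial in $F$ --- are Zariski constructible, $U_1$ is a constructible Euclidean-open set, hence Zariski open.

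For the last assertion, fix $F\in U_1$. By Lemma~\ref{lem_max_implikacje}(1),(3) the map $F$ is proper with reduced Jacobian, and by part~(6) every mono-singularity of $F$ is a fold or a cusp and every multi-singularity is a node (by Corollaries~\ref{cor_GCInd2} and~\ref{disc2} these are simple cusps and simple nodes, and no triple point or other degenerate configuration occurs). Folds, cusps and transverse pairs of folds are precisely the stable mono- and multi-germs for maps of the plane, so every multi-germ of $F$ is stable; that is, $F$ is locally stable.

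The step I expect to be the main obstacle is making the persistence count watertight: one must exclude that under a small deformation a cusp or node escapes to infinity, or that a new cusp or node appears. This is exactly where the a priori upper bounds of Section~\ref{secGC} enter, but those require the deformed map to be proper, so the key preliminary is propagating properness to a neighbourhood of $F_0$ via nonvanishing of the resultant of the leading forms; once that is secured, the lower bounds from stability of the individual cusps and nodes together with Corollaries~\ref{cor_GCInd} and~\ref{foldes} pin the two numbers down. A lesser point is the passage from Euclidean to Zariski openness, which is handled by constructibility of the defining conditions.
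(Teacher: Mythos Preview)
Your proof is correct and follows the same approach as the paper's: persistence of cusps and nodes gives the lower bound, the counting corollaries give the upper bound, and Lemma~\ref{lem_max_implikacje}(6) gives local stability. Your version is more detailed --- in particular you explicitly secure properness in a neighbourhood via the resultant of the leading forms and argue constructibility to pass from Euclidean to Zariski openness, both of which the paper's (very terse) proof leaves implicit.
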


\begin{proof}
Let $F\in U_1$. Let $V$ be a small neighborhood of $F$ and  $F_t; t\in V$ be a small deformation of $F$. Cusps and nodes are locally stable so $c(F_t)\ge c(F)=c(d_1,d_2)$ and $n(F_t)\ge n(F)=n(d_1,d_2)$, consequently we have $c(F_t)=c(d_1,d_2)$ and $n(F_t)=n(d_1,d_2)$ for $t\in V_0\subset V$, where $V_0$ is a smaller neighborhood of $F.$

Moreover, by Lemma \ref{lem_max_implikacje} the mapping $F$ has only locally stable singular points.

\end{proof}

\begin{lem}\label{wazne}
Let $\Phi : U_1\times \C^2 \ni (p,(x,y))\mapsto (p,F_p(x,y))\in U_1\times \C^2$. Then:

\begin{enumerate}
\item $C(\Phi)\cap \{p\} \times \C^2 = C(F_p)$,
\item $\Delta (\Phi)\cap \{p\} \times \C^2 = \Delta (F_p)$,
\item every singular point of $\Delta (\Phi)$ is of the form $(p,q)$, where $q\in Sing(\Delta (F_p))$,
\item stratification of $X:=U_1\times \C^2$ given by
$$\{X_1,X_2,X_3\}=\{X\setminus \Delta (\Phi), \Delta (\Phi)\setminus \Sing(\Delta (\Phi)), \Sing(\Delta (\Phi))\}$$
is a Whitney stratification.
\end{enumerate}
\end{lem}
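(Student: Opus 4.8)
The statement is really four bundled claims about the ``relative'' map $\Phi$ over the parameter space $U_1$, and the plan is to reduce each to the pointwise facts established in Lemma~\ref{lem_max_implikacje} together with the local stability from Lemma~\ref{lem_U1}. For (1) and (2), observe that $J(\Phi)$ at a point $(p,(x,y))$, computed with respect to the variables $(x,y)$ only (the $\C^2$-direction), is exactly $J(F_p)(x,y)$, since $\Phi$ is the identity in the $p$-coordinates; hence $C(\Phi)=\{(p,(x,y)): J(F_p)(x,y)=0\}$ and intersecting with $\{p\}\times\C^2$ gives $C(F_p)$. Similarly $\Delta(\Phi)=\Phi(C(\Phi))=\{(p,q): q\in\Delta(F_p)\}$, so (2) follows, and the fiber of $\Delta(\Phi)$ over $p$ is $\Delta(F_p)$.

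For (3): a singular point of $\Delta(\Phi)$ lying over $p$ cannot be a smooth point of $\Delta(F_p)$, because near a point $(p,q)$ with $q\notin\mathrm{Sing}(\Delta(F_p))$ the map $F_p$ has a fold at the corresponding point of $C(F_p)$ (by Lemma~\ref{lem_max_implikacje}(6)) and a fold is locally stable, so $\Delta(\Phi)$ is locally a trivial family of smooth curves over $U_1$, hence smooth at $(p,q)$. Thus every singular point of $\Delta(\Phi)$ projects to a point $q\in\mathrm{Sing}(\Delta(F_p))$, which by Lemma~\ref{lem_max_implikacje}(6) and Corollaries~\ref{cor_GCInd2}, \ref{disc2} is either a simple cusp or a simple node of $F_p$.

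For (4), the plan is to verify the Whitney conditions stratum by stratum. Since $X_1$ is open, only the incidences $(X_2,X_1)$, $(X_3,X_1)$, $(X_3,X_2)$ matter. The key point is that, because $c(F_p)$ and $n(F_p)$ are constant ($=c(d_1,d_2),\,n(d_1,d_2)$) for all $p\in U_1$ and all singularities are simple cusps and nodes with multiplicity one, the family $\Delta(\Phi)\to U_1$ is, locally over any $p_0$, biholomorphic to the product of $U_1$ with a fixed model: near a fold point the model is a smooth curve $\{y=0\}$ in $\C^2$; near a cusp the model is the standard cuspidal discriminant $\{27x^2=-4y^3\}$ (or rather its image, the semicubical parabola); near a node the model is two transverse smooth lines. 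This trivialization comes from local stability (Lemma~\ref{lem_U1}) combined with the fact that a $\mathcal{C}$-equivalence of stable germs can be chosen depending holomorphically on parameters --- equivalently, one applies Theorem~\ref{tw}/\ref{tw11} and Theorem~\ref{cusp} to see that the multiplicities stay $1$ so no collision or degeneration occurs. Granting such a local product structure $\Delta(\Phi)\cong U_1\times(\text{model discriminant})$, the stratification $\{X_1,X_2,X_3\}$ is locally the product of $U_1$ with the canonical stratification of the model (complement, smooth part, singular part), which is Whitney (a product of a manifold with a Whitney-stratified set is Whitney, and each model --- smooth curve, cusp, node --- carries an obvious Whitney stratification by a standard check). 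Hence $\{X_1,X_2,X_3\}$ is a Whitney stratification of $X$.

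The main obstacle is establishing the local product structure of $\Delta(\Phi)$ over $U_1$ with sufficient regularity --- i.e.\ upgrading the pointwise local stability of each $F_p$ to a statement that is uniform, and in fact trivial, in the parameter $p$. Once one knows (via constancy of the cusp and node counts, all of multiplicity $1$, on the open set $U_1$) that no singularities are created, destroyed, collided, or degenerated as $p$ varies, the relative discriminant is a locally trivial family and the Whitney property is inherited from the fixed models; but making the ``locally trivial family'' claim precise requires invoking the stability theorems above (Theorem~\ref{tw}, Theorem~\ref{tw11}, Theorem~\ref{cusp}) and the classification of stable germs of maps $(\C^2,a)\to(\C^2,F(a))$, rather than a direct computation.
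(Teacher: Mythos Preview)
Your proposal is correct and follows essentially the same approach as the paper's proof. Both arguments establish (1)--(2) from the identity $d\Phi|_{\{p\}\times\C^2}=dF_p$, deduce (3) from the fact that smooth points of $\Delta(F_p)$ correspond to folds (hence to smooth points of the relative discriminant), and obtain (4) by invoking the local stability of cusps and nodes to get a local product structure $\Delta(\Phi)\cong B\times\Gamma$ with $\Gamma$ a cusp curve or a node, from which Whitney regularity of $(X_2,X_3)$ is inherited from the model; the paper simply states this last step more tersely (``since cusps and nodes are stable singularities we can assume that this deformation is trivial'') rather than routing through Theorems~\ref{tw}, \ref{tw11}, \ref{cusp} as you do.
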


\begin{proof}
Let us note that $|d \Phi (p,(x,y))|=|dF_p(x,y)|$, which implies (1) and (2). Moreover, it is easy to observe that $\Phi|_{C(\phi)}$ is singular
only at points $(p,(x,y))$, where $(x,y)$ is a cusp of $F_p$. This implies (3), because other singular points are selfintersection points of $\Delta(\Phi)$, and consequently they come from nodes of $F_p$.

Now we prove (4). Of course the only interesting case is the pair $(X_2,X_3)$. Let $(p,(x,y))\in X_3$. Hence $(x,y)$ is either a cusp of $F_p$ or a node of $F_p$. In both cases the singularity $F: (\C^2, S)\to \C^2$ is stable (here $S$ is one point for a cusp and two points for
a node). Let $B$ be a small ball around $p$ such that $B\subset U_1$. The family $F_q,\ q\in B$ is a deformation of $F: (\C^2, S)\to \C^2$.
Since cusps and nodes are stable singularities we can assume that this deformation is trivial. This means that the pair $(X_2,X_3)$ is locally diffeomorphic  to $(\Gamma\setminus \{O\}, O)\times B$ where $\Gamma$ is either a cusp curve $\{ x^2=y^3\}$ or the cross $\{ xy=0\}$ and $O=(0,0).$
Since $(\Gamma\setminus \{O\}, O)$ is a Whitney stratification of $\Gamma$ we get that $(X_2,X_3)$ is a Whitney stratification of $\overline{X_2}.$
\end{proof}

In the sequel we need following definitions and results from \cite{dj}:

\begin{defi}\label{def_Kinfty}
Let $f:X\to\C^m$ be a polynomial dominant map where $X\subset \C^n$ is an affine algebraic set. Let  $S=\{X_\alpha\}_{\alpha\in I}$ be  a stratification of $X.$ 
We denote by $K_\infty(f|_{X_\alpha})$ the set $\{ y\in\C^m: {\rm there \ is \ a \ sequence} \ x_n\to \infty; \ x_n\in X_\alpha: ||x_n||\nu(d_{x_n}(f|_{X_\alpha}))\to 0\ {\rm and} \ f(x_n)\to y\}$ (here $\nu$ denotes the Rabier function, for details see \cite{Jelonek2005}). 
Now let $C(f,X_\alpha)$ denote the set  of points where $f|_{X_\alpha}$ is not a submersion. By $\Sing(f, S)$ we denote the set of stratified singular values of $f$, i.e.,
\begin{equation}\label{K0f} \Sing(f, S)=\bigcup_{\alpha\in I}K_0(f,X_\alpha), \end{equation}
where $K_0(f,X_\alpha)=\overline{f(C(f,X_\alpha))}.$ 
\end{defi}

By \cite[Theorem 3.3]{Jelonek2005} we have that for every $\alpha$ the set  $K_\infty(f|_{X_\alpha})$ has measure $0$ in $\C^m.$
In particular the set $K(f)$ defined below has also measure $0.$

\begin{defi}
 By $K(f)=K(f, S)$ we denote the set of stratified generalized critical values of $f$ given by
\begin{equation}\label{Kf} K(f):=\bigcup_{\substack{\alpha\in I}} (K_0(f|_{X_\alpha})\cup K_\infty(f|_{X_\alpha})). \end{equation}
\end{defi}

\begin{theo}[First isotopy lemma for non-proper maps, \cite{dj}, Th. 3.1]\label{NonProperIsotopy} Let $X\subset\C^n$ be an affine variety with an affine Whitney stratification $S$ and let $f:X\to\C^m$ be a polynomial dominant map. Let $K(f)$ be the set of stratified generalized critical values of $f$ given by  (\ref{Kf}). Then $f$ is locally trivial outside $K(f)$.
\end{theo}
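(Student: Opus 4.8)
The plan is to follow the Thom--Mather strategy for the first isotopy lemma, adding the extra control at infinity that is forced by non-properness. Fix a point $y_0\in\C^m\setminus K(f)$ and a small closed polydisc (or ball) $B\ni y_0$ with $B\cap K(f)=\emptyset$; it suffices to produce a stratum-preserving homeomorphism $f^{-1}(B)\cong f^{-1}(y_0)\times B$ commuting with $f$. Since $y_0\notin K_0(f|_{X_\alpha})$ for every stratum $X_\alpha$ and each $K_0(f|_{X_\alpha})$ is closed, after shrinking $B$ the restriction $f|_{X_\alpha\cap f^{-1}(B)}$ is a submersion onto $B$ for every $\alpha$; in particular over $B$ the map $f$ is a stratified submersion, so locally on each stratum we may lift the constant vector fields $\partial/\partial y_1,\dots,\partial/\partial y_m$ of $B$ to vector fields tangent to $X_\alpha$.

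Next I would glue these local lifts into a single continuous stratified (controlled, in Mather's sense) vector field $v_i$ on $f^{-1}(B)$ which lifts $\partial/\partial y_i$ and is tangent to every stratum. This is the classical construction: choose tubular neighbourhoods of the strata compatible with the Whitney data, lift $\partial/\partial y_i$ stratum by stratum from lower to higher dimension, and patch with a partition of unity subordinate to a locally finite cover; the Whitney (b)-condition guarantees that the patched field is continuous and that its flow preserves the stratification. So far everything is local on $X$ and uses only that $S$ is an affine Whitney stratification and that $f$ is a stratified submersion over $B$.

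The main obstacle, and the point where $K_\infty$ enters, is that the flow of $v_i$ need not be complete: an integral curve could escape to infinity in finite time, destroying the trivialization. Here I would use $y_0\notin K_\infty(f|_{X_\alpha})$ together with the properties of the Rabier function from \cite{Jelonek2005}. By Definition \ref{def_Kinfty}, $y_0\notin K_\infty(f|_{X_\alpha})$ gives constants $c>0$, $\rho>0$ (after shrinking $B$) with $\|x\|\,\nu(d_x(f|_{X_\alpha}))\ge c$ for all $x\in X_\alpha$ with $\|x\|\ge\rho$ and $f(x)\in B$. Since the minimal-norm lift under a surjective linear map $A$ of a unit covector has norm $\le 1/\nu(A)$, in the gluing above we may choose the local lifts of $\partial/\partial y_i$ to have norm $\le\|x\|/c$ outside a compact set, so the patched field obeys a linear growth bound $\|v_i(x)\|\le C(1+\|x\|)$ on all of $f^{-1}(B)$. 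A vector field of at most linear growth is complete (Gronwall: the norm of a trajectory grows at most exponentially, hence cannot blow up in finite time), and the flow stays in $f^{-1}(B)$ because $v_i$ projects to $\partial/\partial y_i$, which is tangent to $B$.

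Finally I would assemble the trivialization: taking $B$ to be a product of intervals and integrating the complete, pairwise-controlled flows of $v_1,\dots,v_m$ in succession (equivalently, integrating the lift of the radial vector field of $B$ centred at $y_0$) yields a stratum-preserving homeomorphism $f^{-1}(y_0)\times B\to f^{-1}(B)$ commuting with $f$. As $y_0$ was an arbitrary point off $K(f)$ and $K(f)$ has measure zero by \cite[Theorem 3.3]{Jelonek2005}, this proves $f$ is locally trivial outside $K(f)$. The delicate part throughout is the simultaneous control of the lifted fields both along the strata (respecting Whitney's conditions, for the Mather patching) and near infinity (keeping the linear growth bound, hence completeness); marrying the Mather construction with the Rabier-function estimates is the real content of the proof.
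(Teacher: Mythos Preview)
The paper does not give its own proof of this theorem: it is quoted verbatim as \cite[Th.~3.1]{dj} and used as a black box (together with the remark and Corollary~\ref{NonProperIsotopySubmersion} that follow). So there is nothing in the paper to compare your argument against line by line.

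That said, your sketch is the right one and is, in outline, exactly how the result in \cite{dj} is proved: lift the coordinate vector fields on the base to controlled stratified vector fields using the Whitney data (Mather's construction), and use the hypothesis $y_0\notin K_\infty(f|_{X_\alpha})$ via the Rabier function to force an at-most-linear growth bound $\|v_i(x)\|\le C(1+\|x\|)$, hence completeness of the flow by Gronwall. The one point you gloss over, and which is genuinely delicate, is the compatibility of the two patchings: the Mather construction uses a partition of unity subordinate to tubular neighbourhoods of strata and does not, a priori, preserve the pointwise minimal-norm property of the lift that you need for the linear bound. In \cite{dj} (and in the related \cite{Jel5}, \cite{Jelonek2005}) this is handled by working with the Kuo--Rabier distance function throughout and checking that the controlled lifts can be chosen to inherit the estimate; your last sentence correctly flags this as ``the real content of the proof,'' but you should be aware that it requires a careful argument rather than a one-line remark.
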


\begin{re}
In fact Theorem \ref{NonProperIsotopy} works in a slightly more general setting (with the same proof):

\vspace{3mm}

{\it Let $X\subset\C^n$ be an affine variety and let $f:X\to\C^m$ be a polynomial dominant map. Let $U\subset \C^m$ be a Zarsiki open subset of $\C^m$ and let $X'=f^{-1}(U)$. Consider the mapping $f': X'\ni x \mapsto f(x)\in U$. Assume that $S$ is a Whitney stratification of $X'$. Let $K(f')$ be the set of stratified generalized critical values of $f'$ given by (\ref{Kf}). Then $f'$ is locally trivial outside $K(f')$.}
\end{re}

\begin{co}\label{NonProperIsotopySubmersion} Let $X\subset\C^n$ be an affine variety and let $f:X\to\C^m$ be a polynomial dominant map. Let $U\subset \C^m$ be a Zarsiki open subset of $\C^m$ and let $X'=f^{-1}(U)$. Assume that $S$ is a Whitney stratification of $X'$ such that for every stratum $X_\beta\in S$, the restriction $f|_{X_\beta}$ is a submersion and $K_\infty(f|_{X_\beta})=\emptyset$. Then $f|_{X'}$ is a locally trivial fibration.
\end{co}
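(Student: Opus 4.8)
The plan is to deduce this immediately from Theorem \ref{NonProperIsotopy}, in the slightly more general form stated in the Remark following it, by checking that under the two displayed hypotheses the set of stratified generalized critical values $K(f')$ of $f'\colon X'\to U$ is empty.

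First I would unwind the definition: by \eqref{Kf} we have $K(f')=\bigcup_{\alpha}\bigl(K_0(f'|_{X_\alpha})\cup K_\infty(f'|_{X_\alpha})\bigr)$, where $K_0(f'|_{X_\alpha})=\overline{f'(C(f',X_\alpha))}$ and $C(f',X_\alpha)$ is the locus of points at which $f'|_{X_\alpha}$ is not a submersion. The hypothesis that $f|_{X_\beta}$ is a submersion for every stratum $X_\beta\in S$ says precisely that $C(f',X_\beta)=\emptyset$, hence $K_0(f'|_{X_\beta})=\emptyset$ for all $\beta$. The second hypothesis says $K_\infty(f'|_{X_\beta})=\emptyset$ for every $\beta$. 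Combining, $K(f')=\emptyset$.

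Next I would note that $f'$ is indeed dominant onto $U$ (since $f$ is dominant its image contains a dense open subset of $\C^m$, so $f'(X')=f(X)\cap U$ is dense in $U$), so the general version of Theorem \ref{NonProperIsotopy} applies verbatim: $f'$ is locally trivial outside $K(f')$. Since $K(f')=\emptyset$, the map $f'=f|_{X'}\colon X'\to U$ is locally trivial over all of $U$, i.e.\ a locally trivial fibration, which is the assertion.

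I do not expect any genuine obstacle here: the whole content of the corollary sits in Theorem \ref{NonProperIsotopy}, and the statement is merely the observation that the two listed hypotheses are exactly what is needed to force $K(f')=\emptyset$. The only point deserving a sentence of care is that the closure defining $K_0(f'|_{X_\alpha})$ is taken inside $U$ (equivalently, $K_0(f'|_{X_\alpha})\subseteq U$), so that no spurious contribution to $K(f')$ arises from the boundary $\C^m\setminus U$; but since each relevant critical locus is empty this is automatic.
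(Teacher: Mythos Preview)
Your proposal is correct and is exactly the intended derivation: the paper states this corollary without proof, treating it as immediate from the Remark following Theorem~\ref{NonProperIsotopy}, and your argument spells out precisely that deduction (the two hypotheses force $K_0=\emptyset$ and $K_\infty=\emptyset$, hence $K(f')=\emptyset$, so local triviality holds over all of $U$).
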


We will also use a result from \cite{Jel5} which will allow us to simplify computing of $K_\infty(f|_{X_\alpha})$. Let $W_1,W_2$ be vector spaces of dimensions, respectively, $n$ and $m$. Let $H\subset W_1$ be a subspace such that $\dim H=n-r\geq m$. Let $A\in\mathcal{L}(W_1,W_2)$ be a linear mapping and $[a_{ij}]$ its matrix. Let $B_i=\sum b_{ij}x_j$, $i=1,\ldots,r$ be independent linear equations giving $H$ in $W_1$. Let $M$ be the $(m+r)\times n$ matrix obtained by appending the $r$ rows of $[b_{ij}]$ to the matrix $[a_{ij}]$. Let $M_I$, where $I=(i_1,\ldots,i_{m+r})$, denote $(m+r)\times(m+r)$ minor of $M$ given by columns indexed by $I$ and let $|M_I|$ be the modulus of the determinant of $M_I$. Similarly, let $M_J(j)$ denote a $(m+r-1)\times(m+r-1)$ minor of $M$ given by columns indexed by $J$ and deleting the $j$-th row. We define the function

$$g'(A,H)=\max_I\left\{\min_{J\subset I,1\leq j\leq m}\frac{|M_I|}{|M_J(j)|}\right\} ,$$

where we consider only indices with $|M_J(j)|\neq 0$, if all $|M_J(j)|$ are zero, we put $g'(A,H)=0$.

We have the following:

\begin{lem}[\cite{Jel5},Corollary 2.3]
The functions $\nu(\res_H A)$ and $g'(A,H)$ are equivalent, i.e., there are positive constants $C_1,C_2$ such that $C_1 g'(A,H)\leq\nu(\res_H A)\leq C_2 g'(A,H)$. In particular we can replace the Rabier function in Definition \ref{def_Kinfty} with $g'$.
\end{lem}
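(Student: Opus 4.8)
This is Corollary~2.3 of \cite{Jel5}; here is how I would obtain it. The plan is to prove it as a uniform two‑sided comparison of two semialgebraic functions of the pair $(A,H)$, which need only be checked where $\res_H A$ is surjective since both sides vanish exactly on the complement. First I would record the scalings and normalisations. Both $\nu(\res_H A)$ and $g'(A,H)$ are homogeneous of degree $1$ in $A$: for $g'$ this is because a minor $M_I$ uses $m$ rows of the matrix of $A$ while $M_J(j)$ uses $m-1$ of them, as the constraint $1\le j\le m$ forces us to delete a row of $A$ and never one of the $r$ rows defining $H$. Both functions are unchanged if $B_1,\dots,B_r$ is replaced by another basis of the same linear system, since then every $M_I$ and every $M_J(j)$ is multiplied by the same nonzero scalar. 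And both vanish precisely on $\{\operatorname{rank}M<m+r\}$, which is exactly where $\res_H A$ fails to be surjective. So one may assume $\|A\|=1$ and $B_1,\dots,B_r$ orthonormal. Granting the equivalence, the last sentence of the lemma is automatic, as the defining condition $\|x_n\|\,\nu(\cdots)\to0$ of $K_\infty$ in Definition~\ref{def_Kinfty} is insensitive to replacing $\nu$ by an equivalent function.

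The second step is the minor formula for the Rabier function of an ordinary surjective linear map $\phi$: one has $\nu(\phi)=\sigma_m=(\sigma_1\cdots\sigma_m)/(\sigma_1\cdots\sigma_{m-1})$, where $\sigma_1\ge\dots\ge\sigma_m>0$ are the singular values, and by the Cauchy--Binet formula, for the matrix $N$ of $\phi$ in orthonormal bases, $\sigma_1\cdots\sigma_m=(\sum|\det N_I|^2)^{1/2}$ is comparable to the largest maximal minor of $N$, while $\sigma_1\cdots\sigma_{m-1}=\|\wedge^{m-1}N\|$ is comparable to the largest submaximal minor; hence $\nu(\phi)$ is comparable to the ratio of these two, with constants depending only on the matrix sizes.

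The third step reduces the restricted case to this via the augmented matrix. After orthonormalising the $B_i$ and choosing an orthonormal basis of $W_1$ adapted to $H\oplus H^\perp$, the matrix $M$ becomes block triangular, $M=\bigl(\begin{smallmatrix}A' & A''\\ 0 & I_r\end{smallmatrix}\bigr)$, with $A'$ a matrix of $\res_H A$. A Laplace expansion shows that an $(m+r)\times(m+r)$ minor of $M$ is nonzero only if it uses all $r$ columns coming from $H^\perp$, and is then an $m\times m$ minor of $A'$; likewise the $(m+r-1)\times(m+r-1)$ minors $M_J(j)$ with $j\le m$ are exactly the $(m-1)\times(m-1)$ minors of $A'$. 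Passing back to the original coordinates changes all these quantities only by a factor bounded in terms of $n,m,r$ (an orthonormal change of basis on the columns). Combined with the second step this already yields the lower bound: taking the $I$ with largest $|M_I|$ shows $g'(A,H)\ge(\max_I|M_I|)/(\max_{J,j}|M_J(j)|)$, and the right‑hand side is comparable to $\sigma_m(A')=\nu(\res_H A)$.

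I expect the reverse inequality $g'(A,H)\le C\,\nu(\res_H A)$ to be the main obstacle, precisely because of the interleaved $\max$--$\min$ in the definition of $g'$. For each column set $I'$ with $\det A'_{\bullet,I'}\neq0$, writing $v_k$ for the columns of $A'$, fixing $k_0\in I'$ and setting $L_{k_0}=\operatorname{span}(v_k:k\in I'\setminus\{k_0\})$, I would use that $|\det A'_{\bullet,I'}|$ equals $\operatorname{dist}(v_{k_0},L_{k_0})$ times the $(m-1)$‑volume of $A'_{\bullet,I'\setminus\{k_0\}}$, and that this $(m-1)$‑volume is, by Cauchy--Binet, comparable to the largest $(m-1)\times(m-1)$ minor of $A'_{\bullet,I'\setminus\{k_0\}}$, i.e.\ to a quantity of the form $|M_J(j)|$; hence each term of $g'$ is $\le\sqrt m\,\min_{k_0\in I'}\operatorname{dist}(v_{k_0},L_{k_0})$. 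Finally, if $\{w_k\}$ is the basis of $W_2$ dual to $\{v_k:k\in I'\}$, then $\min_{k_0}\operatorname{dist}(v_{k_0},L_{k_0})=1/\max_{k_0}\|w_{k_0}\|$, and combining $\|(A'_{\bullet,I'})^{-1}\|\le\sqrt m\,\max_{k_0}\|w_{k_0}\|$ with $\|(A'_{\bullet,I'})^{-1}\|=\sigma_m(A'_{\bullet,I'})^{-1}\ge\sigma_m(A')^{-1}$ (the monotonicity of $\sigma_m$ under deleting columns, from $A'(A')^*\succeq A'_{\bullet,I'}(A'_{\bullet,I'})^*$) gives $\min_{k_0}\operatorname{dist}(v_{k_0},L_{k_0})\le\sqrt m\,\sigma_m(A')$. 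Assembling the two chains of inequalities yields $g'(A,H)\asymp\nu(\res_H A)$ with constants $C_1,C_2$ depending only on $n,m,r$; beyond this estimate, the only remaining work is the bookkeeping of the dimension‑dependent constants through the coordinate change.
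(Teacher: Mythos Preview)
The paper does not supply its own proof of this lemma; it simply quotes Corollary~2.3 of \cite{Jel5} and moves on. So there is nothing in the paper to compare your argument against, and your sketch already goes well beyond what the present paper does.

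Your reconstruction is sound in its essentials. The homogeneity and the invariance under change of the defining equations $B_i$ are correct, the identity $\nu(\phi)=\sigma_m(\phi)$ together with Cauchy--Binet gives the minor-ratio description of $\nu$, and after passing to an orthonormal basis of $W_1$ adapted to $H\oplus H^\perp$ the block form $M=\bigl(\begin{smallmatrix}A'&A''\\0&I_r\end{smallmatrix}\bigr)$ indeed forces every nonzero $M_I$ to be an $m\times m$ minor of $A'$ and every nonzero $M_J(j)$ with $j\le m$ to be an $(m-1)\times(m-1)$ minor of $A'$. Both your lower bound (via the globally maximal $|M_I|$) and your upper bound (via the distance/dual-basis estimate and the monotonicity $\sigma_m(A'_{\bullet,I'})\le\sigma_m(A')$) are correct in that adapted basis.

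The one step that is thinner than it looks is ``passing back to the original coordinates changes all these quantities only by a factor bounded in terms of $n,m,r$''. An orthogonal change of basis of $W_1$ right-multiplies $M$ by an orthogonal matrix; the Cauchy--Binet sums $\sum_I|M_I|^2$ and $\sum_J|M_J(j)|^2$ are invariant, so your \emph{lower} bound $g'\ge\max_I|M_I|/\max_{J,j}|M_J(j)|\gtrsim\nu$ does transfer to an arbitrary orthonormal basis. The \emph{upper} bound, however, was argued through the block structure specific to the adapted basis, and the $\max_I\min_{J\subset I,j}$ shape of $g'$ is not obviously stable under mixing columns. This is not fatal---one can rerun the Laplace/volume argument for $M|_{V_I}$ in an arbitrary basis, or argue via $\det(MM^T)$ and its principal minors, which are genuinely basis-free---but it needs a couple more lines than ``bounded by a factor depending on $n,m,r$''.
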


Now we can prove the main result: 

\begin{proof}[Proof of Theorem \ref{Thm:main}]
The proof in the case $d_2<d_1$ is slightly different from the proof for $d_1=d_2$. If $d_1=d_2$ then we define $U_2=U_1$ (see Lemma \ref{lem_U1}). If $d_2<d_1$ then we define $U_2=\{F=(f_1,f_2)\in U_1:\ V(f_1) \text{ and } V(J(F)) \text{ do not intersect at infinity}\}$, which is an open subset of $U_1$. If $d_1=d_2$ then by the assumption $G\in U_1=U_2$. If $d_2<d_1$ and $d_2$ does not divide $d_1$ then by the assumption and Lemma \ref{lem_max_implikacje} we have $G\in U_2$. Finally, if $d_2<d_1$ and $d_2$ does divide $d_1$ then $G\in U_1$, however, we will first show that $G$ is topologically stable for $G\in U_2$ and then infer that topological stability holds for $G\in U_1$.

Let us consider the space $X=U_2\times \C^2$ and its Whitney stratification $S=\{X_1,X_2,X_3\}$ as in Lemma \ref{wazne}. Let $\pi : X\ni (F,y)\mapsto F \in U_2$ be the projection. We show that $K(\pi, S)=\emptyset$ and particularly $\pi$ has a trivialization which preserves strata. Of course the Rabier function $\nu(\pi)$ on $X_1$ is everywhere equal to one and consequently 
$K(\pi, X_1)=\emptyset$. On $X_3$ the mapping $\pi$ induces a topological covering of degree $c(d_1,d_2)+n(d_1,d_2)$, hence also $K(\pi, X_3)=\emptyset$.
It remains to verify that $K(\pi, X_2)=K_\infty(\pi, X_2)=\emptyset$.

Recall that $\Phi : U_2\times \C^2 \ni (F,x)\mapsto (F,F(x))\in U_2\times \C^2$. Take $(F,y)\in X_2$. We will compute $g'(A,H)$ for $A=A(F)=d\pi(F,y)$ and $H=H(F)=T_{(F,y)}X_2$. Note that there is a unique $x=(x_1,x_2)\in C(F)$ such that $y=F(x)$. Moreover, $\Phi|C(\Phi)$ is an immersion at $(F,x)$, so $H=d\Phi(T_{(F,x)}C(\Phi))$. Furthermore, $\dim(H)=\dim(\im(d\Phi))=\dim(\Omega_2(d_1,d_2))+1$, so those spaces must be equal.

We have
$$d\Phi(F,x)=\left[\begin{matrix}I&0&0&0\\0&I&0&0\\x_1^ix_2^j,0\leq i+j\leq d_1&0&f_{1,x_1}(x)&f_{1,x_2}(x)
\\0&x_1^ix_2^j,0\leq i+j\leq d_2&f_{2,x_1}(x)&f_{2,x_2}(x)\end{matrix}\right].$$

The image of $d\Phi(F,x)$ is generated by the columns of the matrix above. Obviously the first $\dim(\Omega_2(d_1,d_2))$ columns are linearly independent and, since $F$ has a fold at $x$, the last two are proportional. Let us consider the case $\max\{|f_{1,x_1}(x)|,|f_{2,x_1}(x)|\}\geq\max\{|f_{1,x_2}(x)|,|f_{2,x_2}(x)|\}$, the result in the other case is analogous. Then $H$ is given by the equation corresponding to the vector $[-f_{2,x_1}(x)x_1^ix_2^j,0\leq i+j\leq d_1, f_{1,x_1}(x)x_1^ix_2^j,0\leq i+j\leq d_2, f_{2,x_1}(x),-f_{1,x_1}(x)]$, for the last column we use the equality $(f_{1,x_1}f_{2,x_2}-f_{2,x_1}f_{1,x_2})(x)=0$.

Thus we have
$$M=\left[\begin{matrix}I&0&0&0\\0&I&0&0\\
-f_{2,x_1}(x)x_1^ix_2^j,0\leq i+j\leq d_1& f_{1,x_1}(x)x_1^ix_2^j,0\leq i+j\leq d_2& f_{2,x_1}(x)&-f_{1,x_1}(x)
\end{matrix}\right].$$

There are only two possible nonzero values of $M_I$, which are obtained by excluding one of the two last columns. So we have $|M_I|=|f_{1,x_1}(x)|$ or $|M_I|=|f_{2,x_1}(x)|$. We obtain $M_J(k)$ by excluding from $M_I$ the $k$-th row and either the $k$-th column or the other of the last two columns.
We obtain, respectively, $|M_J(k)|=|M_I|$ or $|M_J(k)|$ equal to one of the first $\dim(\Omega_2(d_1,d_2))$ entries in the last row. Thus
$$g'(A,H)=\min\left\{1,\frac{\max\{|f_{1,x_1}(x)|,|f_{2,x_1}(x)|\}}
{\max\{|f_{2,x_1}(x)x_1^{d_1}|, |f_{2,x_1}(x)x_2^{d_1}|,|f_{1,x_1}(x)x_1^{d_2}|, |f_{1,x_1}(x)x_2^{d_2}|\}}\right\}.$$

In particular we have $g'(A,H)\geq\min\{1,|x_1|^{-d_1},|x_2|^{-d_1}\}\geq\min\{1,\|x\|^{-d_1}\}$.

Now suppose that $F\in K_\infty(\pi, X_2)$. There is a sequence $(F_t,y_t)$ such that $F_t\rightarrow F$ and $y_t\rightarrow \infty$ and $\|(F_t,y_t)\|g'(A(F_t),H(F_t))\rightarrow 0$. Take $x_t=(x_{1,t},x_{2,t})\in C(F_t)$ such that $y_t=F_t(x_t)$. By taking a subsequence we may assume that $x_t$ is convergent to a point $x_0\in\mathbb{P}^2$. Moreover, $x_0$ lies on the line at infinity, for otherwise $F_t(x_t)$ would converge to $F(x_0)$.

Let $\tilde{f}_1$ be the homogeneous part of $f_1$ of degree $d_1$. Note that either $\tilde{f}_1(x_0)=0$ or there is a neighborhood $U_{x_0}$ of $x_0$ (in $\mathbb{P}^2$ treated as a manifold), a neighborhood $U_{F}$ of $F$ and $\varepsilon>0$ such that $\|f_{1,t}(x)\|\geq\varepsilon\|x\|^{d_1}$ for all $x\in U_{x_0}\cap\C^2$ and all $F_t=(f_{1,t},f_{2,t})$. Analogously for $f_2$ and $d_2$.

If $d_1=d_2$ then from Lemma \ref{lem_max_implikacje}(3) we obtain that $\tilde{f}_1(x_0)$ and $\tilde{f}_2(x_0)$ can not be simultaneously zero. Thus $\|(F_t,y_t)\|\cdot g'(A(F_t),H(F_t))\geq\varepsilon\|x\|^{d_1}\cdot \|x\|^{-d_1}=\varepsilon$, a contradiction.

On the other hand, if $d_1>d_2$ then since $F\in U_2$ and $x_0\in\overline{V(J(F))}$ we have $x_0\notin\overline{V(f_1)}$, i.e., $\tilde{f}_1(x_0)\neq 0$, which leads to the same contradiction as above.

Thus we have established that $K(\pi, S)=\emptyset$. Now by Corollary \ref{NonProperIsotopySubmersion} we see that there is a small neighborhood $V\subset U_2$ of the point $G$ and a continuous family of homeomorphisms $\Phi_q, q\in V$ such that $\Phi_q: \C^2\to\C^2$ and $\Phi_q(\Delta(F_q))=\Delta(G)$. In particular the family 
$\Phi_q\circ F_q, q\in V$ has a constant discriminant and constant topological degree. Now we can argue as in the proof of Theorem 4.3 in \cite{jel} to show that the family $\Phi_q\circ F_q, q\in V$ is trivial. Hence also the family $F_q, q\in V$ is trivial.

It remains only to show that for $d_2<d_1$ and $d_2$ dividing $d_1$ topological stability of all $G\in U_2$ implies topological stability for $G\in U_1$. Let $F\in U_1\setminus U_2$. We may assume that the common point at infinity of $V(f_1)$ and $V(J(F))$ is $(0:0:1)$, i.e., that $x_1$ divides $\tilde{f_1}$ and $J(\tilde{F})$. Let $\tilde{f_1}=x_1h$, we have $J(\tilde{F})=h\tilde{f}_{2,x_2}+x_1(h_{x_1}\tilde{f}_{2,x_2}- h_{x_2}\tilde{f}_{2,x_1})$, so $x_1$ divides $h\tilde{f}_{2,x_2}$. Thus either $x_1^2$ divides $\tilde{f_1}$ or $x_1$ divides both $\tilde{f_1}$ and $\tilde{f_2}$. The latter is excluded by Lemma \ref{lem_max_implikacje}(2) and we obtain that $U_2=\{F=(f_1,f_2)\in U_1:\ \tilde{f_1} \text{ does not have a multiple root}\}$. Let $F_a=\left(f_1+af_2^{d_1/d_2},f_2\right)$, for $a\in\C$. Obviously $F_a$ is topologically equivalent to $F$, we will show that for a generic choice of $a$ we have $F_a\in U_2$, which will conclude the proof. Notice that a multiple root of $\tilde{f}_1+a\tilde{f}_2^{d_1/d_2}$ must also be a root of $\tilde{f}_{1,x_1}+\frac{ad_1}{d_2}\tilde{f}_{2,x_1}f_2^{d_1/d_2-1}$ and hence also of $\tilde{f}_{1,x_1}\tilde{f}_2-\frac{d_1}{d_2}\tilde{f}_1\tilde{f}_{2,x_1}$. Thus there are finitely many points at infinity depending only on $F$ at which $\tilde{f}_1+a\tilde{f}_2^{d_1/d_2}$ may have a multiple root. Thus for almost all $a\in\C$ we have $F_a\in U_2$.
\end{proof}

\section{Examples}\label{secEx}

In this section we give examples of mappings with a generic topological type. In all cases we choose a mapping $G_0\in\Omega_2(d_1,d_2)$ and use Algorithm 1 (see Appendix) to check that $G_0$ has the maximal possible number of generalized cusps. From Corollary \ref{cor_GCInd2} we obtain that all those generalized cusps are simple cusps. Then we use Algorithm 2 to check that $G_0$ has the maximal possible number of generalized nodes. By Corollary \ref{disc1} the generalized nodes are in fact simple nodes. Finally, from Theorem \ref{Thm:main} we obtain that $G_0$ has a generic topological type, i.e., there is a Zariski open, dense subset $U\in\Omega_2(d_1,d_2)$ such that for all $G\in U$ there exist homeomorphisms $\Phi,\Psi:\C^2\to \C^2$ such that $$G=\Phi\circ G_0\circ \Psi.$$

The following examples were verified with Magma Computational Algebra System \cite{mag}. They represent all the cases when the mappings with generic topological type have less than $500$ nodes. The reader may verify the examples using the calculator available on the web page {\bf http://magma.maths.usyd.edu.au/calc/}, however for mappings with more than $100$ nodes it may be necessary to conduct the calculations over a large finite field in order to accommodate the $120$ seconds calculating time restriction.

\begin{ex}
Consider the case $d_2=1$. Obviously $(x,y)$ and $(x^2,y)$ have generic topological type for $d_1=1,2$, respectively. A mapping with generic topological type has $(d_1-1)(d_1-2)$ cusps and $(d_1-1)(d_1-2)(d_1-3)/2$ nodes. For $2<d_1\leq 12$ we verified that the mapping
$$(x^{d_1}+xy^{d_1-1}+x^2y^{d_1-2}+x^{d_1-1}+x,y)$$
satisfies those requirements.
\end{ex}

\begin{ex}
Consider the case $d_2=2$. The space $\Omega_2(2,2)$ was thoroughly investigated in \cite{fj}, in particular it was shown that $(x^2+y,x+y^2)$ has generic topological type. A mapping with generic topological type has $d_1^2-1$ cusps and $[2d_1^3-4d_1^2+d_1+2-\gcd(d_1,2)d_1]/2$ nodes. For $2<d_1\leq 8$ we verified that the mapping
$$(x^{d_1}+xy^{d_1-1}+x^2y^{d_1-2}+x^{d_1-1}+x,x^2+y^2+y)$$
satisfies those requirements.
\end{ex}

\begin{ex}
Consider the case $d_2=3$. A mapping with generic topological type has $d_1^2+3d_1-2$ cusps and $[3d_1^3+2d_1^2-6d_1+3-\gcd(d_1,3)(d_1+1)]/2$ nodes. For $3\leq d_1\leq 6$ we verified that the mapping
$$(x^{d_1}+2x^{d_1-1}y+xy^{d_1-1}+x^2y^{d_1-2}+x^{d_1-1}+x,x^3+2y^3+y)$$
satisfies those requirements.
\end{ex}

\begin{ex}
Consider the case $d_2=4$. We verified that the mapping
$$(x^{d_1}+2x^{d_1-1}y+xy^{d_1-1}+x^2y^{d_1-2}+x^{d_1-1}+x,x^4+2y^4+y)$$
has $39$ cusps and $204$ nodes for $d_1=4$ and $54$ cusps and $387$ nodes for $d_1=5$. Thus in those cases it has generic topological type.
\end{ex}

\section{Appendix}

Algorithm 1 (verifies the number of cusps of $(f,g)\in\Omega_2(d_1,d_2)$)
\begin{verbatim}
Q:=RationalField(); R<x,y>:=PolynomialRing(Q,2,``lex");
F:=f(x,y); G:=g(x,y);
J:=Derivative(F,x)*Derivative(G,y)-Derivative(F,y)*Derivative(G,x);
J_1:=Derivative(F,x)*Derivative(J,y)-Derivative(F,y)*Derivative(J,x);
J_2:=Derivative(J,x)*Derivative(G,y)-Derivative(J,y)*Derivative(G,x);
I:=ideal<R | J,J_1,J_2>;
H:=Radical(EliminationIdeal(I,{x}));
Degree(Basis(H)[1]);
\end{verbatim}

The algorithm above returns a lower bound on the number of generalized cusps of $(f,g)$. By Corollary \ref{cor_GCInd2} if the result is $c(d_1,d_2)$ then it is the number of cusps of $(f,g)$. Note that if $(f,g)$ has multiple cusps with the same $x$ coordinate then the algorithm will return a number smaller than $c(d_1,d_2)$. Thus the verification may be false negative. One could remedy this by performing a generic coordinate change before computing the elimination ideal.

\vspace{3mm}

Algorithm 2 (verifies the number of nodes of $(f,g)\in\Omega_2(d_1,d_2)$)
\begin{verbatim}
Q:=RationalField(); R<u,t,p,q,x,y>:=PolynomialRing(Q,6,``lex");
F:=f(x,y); G:=g(x,y);
J:=Derivative(F,x)*Derivative(G,y)-Derivative(F,y)*Derivative(G,x);
F_1:=f(p,q); G_1:=g(p,q);
J_1:=Derivative(F_1,p)*Derivative(G_1,q)-Derivative(G_1,p)*Derivative(F_1,q);
L:=(x-p)*t-1;
I:=ideal<R | J,J_1,F-F_1,G-G_1,F-u,L>;
H:=Radical(EliminationIdeal(I,{u}));
Degree(Basis(H)[1]);
\end{verbatim}

The algorithm above returns a lower bound on the number of generalized nodes of $(f,g)$. By Corollary \ref{disc2} if the result is $n(d_1,d_2)$ then it is the number of nodes of $(f,g)$. Note that if $(f,g)$ has a node $(a,b;c)$ where $a$ and $b$ have the same $x$ coordinate then the algorithm will return a number smaller than $n(d_1,d_2)$. The result will be false also if there are nodes $(a_1,b_1;c_1)$ and $(a_1,b_1;c_1)$ where $c_1$ and $c_2$ have the same first coordinate. One could remedy this by replacing x-p in the definition of L with a generic combination of x-p and y-q and replacing F-u in the definition of I with $a$*F+$b$*G-u for a generic choice of $a$ and $b$.

\end{document}